\theoremstyle{plain}
\newtheorem{theorem}{Theorem}
\newtheorem{corollary}{Corollary}
 \newtheorem{lemma}{Lemma}
\theoremstyle{definition}
\theoremstyle{remark}
\numberwithin{equation}{section}
\newdimen\plusheight
\def\+{\;\lower\plusheight\hbox{$+$}\;}
\newdimen\minusheight
\def\-{\;\lower\minusheight\hbox{$-$}\;}
\newdimen\cdotsheight
\def\cds{\lower\cdotsheight\hbox{$\cdots$}}
\begin{document}
\title[Divergence in the General Sense of $q$-Continued Fraction ]
       {On the Divergence in the General Sense of $q$-Continued Fraction on
the Unit Circle. }
\author{D. Bowman}
\address{Mathematics Department\\
       University of Illinois \\
        Champaign-Urbana, Illinois 61820}
\email{bowman@math.uiuc.edu}
\author{J. Mc Laughlin}
\address{Mathematics Department\\
       University of Illinois \\
        Champaign-Urbana, Illinois 61820}
\email{jgmclaug@math.uiuc.edu}
\keywords{ Continued Fractions, Rogers-Ramanujan}
\subjclass{Primary:11A55,Secondary:40A15}
\thanks{The second author's research supported in part by a
Trjitzinsky Fellowship.}
\date{May, 11, 2001}
\begin{abstract}
We show, for each $q$-continued fraction $G(q)$ in a certain class
of continued fractions,
 that
there is an uncountable set of
points
 on the unit circle at which $G(q)$ diverges
in the general sense. This class  includes the Rogers-Ramanujan continued
fraction and the three Ramanujan-Selberg continued fraction.

We discuss the implications of our theorems for the general convergence of
other $q$-continued fractions, for example the
G\"{o}llnitz-Gordon continued fraction, on the unit circle.
\end{abstract}

\maketitle

\section{Introduction}
In  \cite{BML01}, we made a detailed study of the convergence behaviour of
the famous Rogers-Ramanujan continued fraction $K(q)$, where
{\allowdisplaybreaks
\begin{equation}\label{rreq}
 K(q):= 1
\+
 \frac{q}{1}
\+
 \frac{q^{2}}{1}
\+
 \frac{q^{3}}{1}
\+\,\cds.
\end{equation}
}

It is an easy consequence of Worpitzky's Theorem
 (see \cite{LW92}, pp. 35--36) that
$R(q)$ converges to a value in $\hat{\mathbb{C}}$
for any $q$ inside the
unit circle.
\begin{theorem}(Worpitzky) Let the continued fraction
$K_{n=1}^{\infty}a_{n}/1$
be such that $|a_{n}|\leq 1/4$ for $n \geq 1$. Then
$K_{n=1}^{\infty}a_{n}/1$ converges. All approximants of
the continued fraction lie in the disc $|w|<1/2$ and the value
of the continued fraction is in the disk $|w|\leq1/2$.
\end{theorem}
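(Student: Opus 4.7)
The plan is to view the continued fraction through its M\"{o}bius transformations $s_n(w)=a_n/(1+w)$ and their compositions $S_n=s_1\circ s_2\circ\cdots\circ s_n$, so that the $n$-th approximant is simply $S_n(0)$. The theorem then breaks into three claims: (i) each $S_n(0)$ lies in the open disk $|w|<1/2$; (ii) the sequence $\{S_n(0)\}$ converges in $\hat{\mathbb C}$; (iii) the limit lies in $|w|\leq 1/2$.

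For (i) the natural first step is to exhibit the closed disk $V=\{|w|\leq 1/2\}$ as a common value region for the $s_n$: if $|w|\leq 1/2$ then $|1+w|\geq 1/2$, hence $|s_n(w)|\leq (1/4)/(1/2)=1/2$. Iterating gives $S_n(V)\subseteq V$ and in particular $S_n(0)\in V$. To upgrade to the \emph{open} disk one peels the composition from the inside: set $x_n=s_n(0)=a_n$, so that $|x_n|\leq 1/4<1/2$, and $x_k=s_k(x_{k+1})$. If $|x_{k+1}|<1/2$ strictly then $|1+x_{k+1}|>1/2$ strictly, forcing $|x_k|<1/2$. Hence $|S_n(0)|=|x_1|<1/2$.

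For (ii) I would use the standard determinant identity
\[
S_n(0)-S_{n-1}(0)=\frac{(-1)^{n-1}\,a_1 a_2 \cdots a_n}{B_n B_{n-1}},
\]
where $B_n=B_{n-1}+a_n B_{n-2}$ with $B_{-1}=0$, $B_0=1$. The numerator is bounded by $4^{-n}$, so the problem reduces to a lower bound on $|B_n B_{n-1}|$ that beats $4^{-n}$. The extremal case $a_n\equiv -1/4$ has characteristic polynomial $(x-1/2)^2$ and yields $B_n=(n+1)/2^n$ exactly; the general estimate $|B_n|\gtrsim (n+1)/2^n$ can then be extracted inductively from the recursion, making $\sum(S_n(0)-S_{n-1}(0))$ absolutely convergent and giving (ii). Claim (iii) is automatic since $V$ is closed and contains every approximant.

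The main obstacle is the extremal nature of the hypothesis $|a_n|\leq 1/4$. When $|a_n|$ is allowed to equal $1/4$, the map $s_n$ acts \emph{parabolically} on $\partial V$, so no uniform hyperbolic contraction is available and a naive geometric bound $|B_n|\geq c\cdot\rho^{-n}$ with $\rho<1/4$ cannot be hoped for; one is forced into the sharp linear-times-exponential estimate $(n+1)/2^n$, which only barely beats $4^{-n}=1/(2\cdot 2^n\cdot 2^n)$ after the telescoping is written out. This same parabolic phenomenon is also the reason the limit can sit on $\partial V$ rather than strictly inside.
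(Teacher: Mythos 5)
Your proposal is correct, but note that the paper itself offers no proof of this statement: Worpitzky's theorem is quoted verbatim from Lorentzen and Waadeland (\cite{LW92}, pp.~35--36) and used as a black box to get convergence of $K(q)$ inside the unit disk, so there is nothing in the paper to compare against except the classical argument -- which is exactly what you have reconstructed. Your three claims are all sound: the value-region computation $|w|\leq 1/2\Rightarrow|1+w|\geq 1/2\Rightarrow |a_n/(1+w)|\leq 1/2$ gives the closed disk, and peeling the composition from the inside (with the strict base case $|s_n(0)|=|a_n|\leq 1/4<1/2$) correctly sharpens this to the open disk for the approximants, while the limit can only be guaranteed to lie in the closed disk. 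The one step you leave as a gesture, the lower bound $|B_n|\geq (n+1)/2^n$, does go through by the standard induction: from $|B_n|\geq |B_{n-1}|-\tfrac14|B_{n-2}|$ one gets $|B_n|-\tfrac12|B_{n-1}|\geq \tfrac12\bigl(|B_{n-1}|-\tfrac12|B_{n-2}|\bigr)$, hence $|B_n|-\tfrac12|B_{n-1}|\geq 2^{-n}$ (using $B_{-1}=0$, $B_0=1$), and unwinding gives $|B_n|\geq (n+1)2^{-n}$; this also shows $B_n\neq 0$, so the determinant identity and the bound $|S_n(0)-S_{n-1}(0)|\leq 1/(2n(n+1))$ are legitimate and the telescoping series converges absolutely. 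Your closing remark about the parabolic extremal case $a_n\equiv -1/4$ is apt: it explains both why the estimate must be of the form $(n+1)/2^n$ rather than a cleaner geometric one, and why the value of the continued fraction can land on the boundary $|w|=1/2$.
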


Suppose $|q|>1$.
For $n \geq 1$, define
{\allowdisplaybreaks
\[
K_{n}(q):=1+
\frac{q}{1}
\+
 \frac{q^{2}}{1}
\+
 \frac{q^{3}}{1}
\+ \cds \+
 \frac{q^{n}}{1}.
\]
}
Then
{\allowdisplaybreaks
\begin{align*}
\lim_{j \to \infty}K_{2 j + 1}(q) &= \frac{1}{K(-1/q)},\\
\lim_{j \to \infty}K_{2 j}(q) &= \frac{K(1/q^{4}) }{q}.
\end{align*}
}
This was stated by Ramanujan without proof and proved by Andrews, Berndt,
Jacobson and Lamphere in 1992 \cite{ABJL92}.

This leaves the question of convergence on the unit circle.
The convergence behaviour at roots of unity was investigated by
Schur, who showed in \cite{S17} that if $q$ is a primitive $m$-th
root of unity, where $m \equiv 0$ $(\text{mod}\,\,5)$, then $K(q)$
diverges and if
 $q$ is a primitive $m$-th
root of unity, $m \not \equiv 0 (\text{mod}\,\,5)$, then $K(q)$ converges
and
{\allowdisplaybreaks
\begin{equation}\label{E:ScM}
K(q) =
\lambda \, q^{(1 - \lambda \sigma  m )/5} K(\lambda),
\end{equation}
}
where $\lambda = \left( \frac{m}{5} \right)$ (the Legendre symbol)
and $\sigma$ is the least positive residue of $m \,(\text{mod}\,\, 5)$.
Note that
$K(1)= \phi = (\sqrt{5}+1)/2$, and $K(-1) = 1/ \phi$.

Remark: Schur's result was essentially proved by Ramanujan, probably earlier than
Schur (see \cite{R57}, p.383). However, he made a calculational error (see \cite{Hg97},
p.56).

There remains the question of whether
the Rogers-Ramanujan  continued fraction converges or diverges
at a point on the unit circle which is not a root of unity.
The chief difficulty in
trying to apply the usual convergence/divergence tests stems from
the facts that the
Rogers-Ramanujan continued fraction  converges at a set of
points that is dense on the unit circle and  diverges at another
such dense set. This is clear from the result of Schur  above.

This question about
convergence on the unit circle at points which were not roots of unity
remained unanswered  until our paper,   \cite{BML01},
where we showed the existence of
an uncountable set of points on the unit circle at which the
Rogers-Ramanujan continued fraction diverged.

To discuss this
topic we use the following notation. Let the regular continued
fraction expansion of any irrational $t  \in (0, 1)$ be denoted by
$t=[0, e_{1}(t), e_{2}(t),  \cdots]$. Let the $i$-th approximant
of this continued fraction expansion be denoted by
 $c_{i}(t)/d_{i}(t)$. We will sometimes write $e_{i}$ for $e_{i}(t)$,  $c_{i}$ for $c_{i}(t)$ etc,
if there is no danger of ambiguity. Let $ \phi = ( \sqrt{5}+1)/2$.
In    \cite{BML01}, we proved the following theorem.
 \begin{theorem} \label{T:t1}\cite{BML01}
Let
{ \allowdisplaybreaks
 \begin{align} \label{E:seq}
S&= \{t  \in (0, 1): e_{i+1}(t)  \geq   \phi^{d_{i}(t)} \text{
infinitely often} \}.
 \end{align}
}
Then $S$ is an uncountable set of measure zero and,  if $t  \in S$
and $y =  \exp (2  \pi i t)$,  then the Rogers-Ramanujan continued
fraction diverges at $y$.
 \end{theorem}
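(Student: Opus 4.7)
The plan is to establish the three conclusions of the theorem separately: $S$ is uncountable, $S$ has Lebesgue measure zero, and $K(y)$ diverges at $y=e^{2\pi i t}$ for every $t\in S$. The first two are softer and I would dispatch them first. For uncountability, I would construct an injection from $\{0,1\}^{\mathbb{N}}$ into $S$: given $(\epsilon_n)\in\{0,1\}^{\mathbb{N}}$, define $t$ by its regular continued fraction recursively, at stage $n$ choosing $e_{n+1}(t)=\lceil\phi^{d_n(t)}\rceil+\epsilon_n$. Distinct binary sequences produce distinct partial quotients, hence distinct $t$'s, all in $S$. For measure zero, the conditional Lebesgue measure of $\{t:e_{n+1}(t)\geq N\}$ given $e_1(t),\dots,e_n(t)$ is $O(1/N)$ by the standard theory of the Gauss map, so the (conditional) measure of $\{t:e_{n+1}(t)\geq \phi^{d_n(t)}\}$ is $O(\phi^{-d_n})$. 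Since $d_n\geq F_n\geq c\,\phi^n$ (Fibonacci lower bound), $\sum_n\phi^{-d_n}<\infty$, and Borel--Cantelli yields measure zero.

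The heart of the proof is divergence. Fix $t\in S$ and let $(n_k)$ be an infinite sequence with $e_{n_k+1}\geq\phi^{d_{n_k}}$. Set $\zeta_k:=\exp(2\pi i\,c_{n_k}/d_{n_k})$, which is a primitive $d_{n_k}$-th root of unity because $\gcd(c_{n_k},d_{n_k})=1$. The classical approximation estimate for convergents gives
\[
|t-c_{n_k}/d_{n_k}|\leq\frac{1}{d_{n_k}\,d_{n_k+1}}\leq\frac{1}{d_{n_k}^{\,2}\,\phi^{d_{n_k}}},
\]
so $|y-\zeta_k|\leq 2\pi/(d_{n_k}\phi^{d_{n_k}})$. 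The plan is then to compare approximants of $K(y)$ and $K(\zeta_k)$ at indices $\leq d_{n_k}$: since the partial numerators satisfy $|y^j-\zeta_k^j|\leq j\,|y-\zeta_k|$ for such $j$, a perturbation bound on the continued-fraction recursion (controlling numerators and denominators of approximants inductively) shows that $|K_{d_{n_k}}(y)-K_{d_{n_k}}(\zeta_k)|\to 0$, and similarly for $K_{d_{n_k}-1}$, $K_{d_{n_k}+1}$, etc.

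To conclude divergence, I would extract two subsequences of approximants of $K(y)$ with distinct limits. Schur's result gives an explicit value of $K(\zeta_k)$ when $5\nmid d_{n_k}$ and divergence when $5\mid d_{n_k}$; in either case one can identify, among the approximants $K_{d_{n_k}-1}(\zeta_k)$, $K_{d_{n_k}}(\zeta_k)$, $K_{d_{n_k}+1}(\zeta_k)$, two that are bounded apart (using the periodicity of $\zeta_k^{\,j}$ modulo $d_{n_k}$ and Schur's identity to compute them explicitly, or noting that at a divergent root of unity the approximants themselves exhibit oscillation of order one). Transferring this separation to $K(y)$ via the perturbation estimate yields two accumulation points for the approximant sequence $(K_n(y))_n$, forcing divergence.

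The principal obstacle is the perturbation step. One must bound the cumulative effect of replacing $\zeta_k^{\,j}$ by $y^j$ through $d_{n_k}$ iterations of the continued-fraction recursion: the numerators $A_n$ and denominators $B_n$ of the approximants can in principle grow large enough that even a tiny change in the $q$-coefficient is amplified. The saving is the massive gap $|y-\zeta_k|\leq\phi^{-d_{n_k}}$, which is far smaller than any plausible polynomial or even single-exponential growth of $A_n,B_n$ in $n=d_{n_k}$; a careful inductive estimate, possibly separating the cases $5\mid d_{n_k}$ and $5\nmid d_{n_k}$, should close the gap and complete the proof.
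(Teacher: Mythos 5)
The paper does not actually prove Theorem \ref{T:t1}; it quotes it from \cite{BML01} and only exhibits the surrounding machinery (Lemma \ref{L:l1}, the root-of-unity data from Schur, Lemma \ref{2limlem}) for the general-convergence analogue. Your outline follows the same strategy that machinery embodies: approximate $y$ by the primitive $d_{n_k}$-th roots of unity $\zeta_k=\exp(2\pi i c_{n_k}/d_{n_k})$, use Schur's explicit values there, and transfer to $y$ by a Lipschitz bound on the polynomials $P_n(q),Q_n(q)$. Your uncountability construction is correct, and your Borel--Cantelli argument for measure zero is essentially the Rockett--Sz\"usz lemma the paper invokes (via $d_i\geq F_{i+1}$).

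The genuine gap is the perturbation step, which you flag as the ``principal obstacle'' and then dismiss with a heuristic that is quantitatively wrong. For the Rogers--Ramanujan recursion $Q_n(q)=Q_{n-1}(q)+q^nQ_{n-2}(q)$ one has $\sum_i|\gamma_i|=Q_n(1)=F_{n+1}\sim\phi^{n+1}/\sqrt5$ and $\deg Q_n\sim n^2/4$, so the Lipschitz constant $\kappa_n=\sum_i i|\gamma_i|$ of Lemma \ref{L:l1} is of order $n^2\phi^n$. Against $|y-\zeta_k|\leq 2\pi/(d^2\phi^{d})$ with $n=d=d_{n_k}$ this gives only $|Q_d(y)-Q_d(\zeta_k)|=O(1)$, not $o(1)$, while $|Q_{d-1}(\zeta_k)|=1$ by Schur, so an $O(1)$ error destroys the comparison. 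Your claim that the gap $\phi^{-d}$ is ``far smaller than any plausible polynomial or even single-exponential growth of $A_n,B_n$'' is false: the growth is single-exponential with base exactly $\phi$, which is precisely why the threshold in the theorem is $\phi^{d_i}$ and not anything weaker; the whole proof lives or dies on beating this by a sharper estimate (e.g.\ a Lipschitz constant of order $n\phi^n$ rather than $n^2\phi^n$, or a bound exploiting that $|Q_j(\zeta_k)|$ grows only like $\phi^{j/d}$ along the chosen root of unity). Two secondary points: the pair of separated approximants should be named concretely --- the natural choice is $Q_{d-2}(\zeta_k)=0$ versus $|Q_{d-1}(\zeta_k)|=1$, i.e.\ $K_{d-2}(\zeta_k)=\infty$ while $K_{d-1}(\zeta_k)$ is unimodular, which after transfer gives chordal separation of consecutive approximants of $K(y)$ infinitely often --- and the case $5\mid d_{n_k}$ cannot be waved through ``in either case,'' since Schur's convergence formula is unavailable there and nothing in the definition of $S$ prevents every index witnessing the defining inequality from having $5\mid d_i$.
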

 We were also able to give explicit examples of points $y$ on the unit circle at
which $K(y)$ diverges.
 \begin{corollary} \label{C:ex}
Let $t$ be the number with continued fraction expansion equal $[0,
e_{1}, e_{2},   \cdots]$,  where $e_{i}$ is the integer consisting
of a tower of $i$ twos with an $i$ an top.
{ \allowdisplaybreaks
 \begin{multline*}
t=[0, 2, 2^{ \displaystyle{2^{2}}}, 2^{ \displaystyle{2^{
\displaystyle{2^{3}}}}},  \cdots]= \\
0.484848484848484848484848484848484848484848484848484848484 \\
84848484848484848484849277885083112437522992318812011  \cdots
 \end{multline*}
}
If $y =  \exp (2  \pi i t)$ then $K(y)$ diverges.
 \end{corollary}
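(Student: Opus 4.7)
My plan is to apply Theorem~\ref{T:t1}. The given $t$ has an infinite regular continued fraction expansion with all partial quotients positive, so $t$ is irrational and lies in $(0,1)$; hence it suffices to show $t\in S$, which I will do by establishing $e_{i+1}\ge\phi^{d_i}$ for every sufficiently large~$i$.

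The first step is a crude upper bound on $d_i$. From the standard recurrence $d_i=e_id_{i-1}+d_{i-2}$ together with $d_{i-2}\le d_{i-1}$, I get $d_i\le(e_i+1)d_{i-1}$, so iterating, $d_i\le 2^i\prod_{j=1}^i e_j\le 2^i\,e_i\,e_{i-1}^{\,i-1}$ (using that the $e_j$ are strictly increasing). Since $e_{j}$ is, by definition, a tower of $j$ twos topped by $j$, we have $e_{i-1}\le\log_2 e_i$, and a short calculation shows that $2^i e_{i-1}^{\,i-1}\le e_i$ once $i\ge 3$; this yields $d_i\le e_i^{\,2}$ for all such $i$.

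The second step is a lower bound on $e_{i+1}$ that exploits the extra exponential layer in the tower. Let $T_k(x)$ denote the tower of $k$ twos with $x$ on top, so $T_0(x)=x$, $T_k(x)=2^{T_{k-1}(x)}$, and $e_i=T_i(i)$. A straightforward induction on $k\ge 1$ gives $T_k(x+1)\ge 2\,T_k(x)$ for $x\ge 1$ (the step uses $T_{k+1}(x+1)=2^{T_k(x+1)}\ge 2^{2T_k(x)}=T_{k+1}(x)^2\ge 2T_{k+1}(x)$). Applied with $k=i-1$ and $x=i$, this gives $T_{i-1}(i+1)\ge 2\log_2 e_i$, hence
\[
\log_2 e_{i+1}=T_i(i+1)=2^{T_{i-1}(i+1)}\ge 2^{2\log_2 e_i}=e_i^{\,2}.
\]

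Combining the two bounds, $d_i\le e_i^{\,2}\le\log_2 e_{i+1}$ for $i\ge 3$, so $\phi^{d_i}\le 2^{d_i}\le e_{i+1}$. Thus $t\in S$, and Theorem~\ref{T:t1} delivers the divergence of $K(y)$ at $y=\exp(2\pi it)$. The only real difficulty here is the bookkeeping of nested towers; no delicate estimate is needed, because the single extra exponential layer separating $e_{i+1}$ from $e_i$ trivially outgrows any polynomial expression in $e_1,\dots,e_i$ and therefore swamps $\phi^{d_i}$.
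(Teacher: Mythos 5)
Your proof is correct and follows the intended route: the paper states this corollary as a direct consequence of Theorem~\ref{T:t1}, and the only work is to verify that $t\in S$, which you do by bounding $d_i\le e_i^{2}$ via the recurrence $d_i=e_id_{i-1}+d_{i-2}$ and then showing the extra exponential layer forces $\log_2 e_{i+1}\ge e_i^{2}\ge d_i$, whence $e_{i+1}\ge 2^{d_i}\ge\phi^{d_i}$ for all $i\ge 3$. The tower bookkeeping (the inequalities $e_{i-1}\le\log_2 e_i$ and $T_k(x+1)\ge 2T_k(x)$) all checks out.
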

We were also able to show the existence of an uncountable set of points
on the unit circle at which $R(q)$ diverges in the \emph{general} sense
(see below for the definition of \emph{general convergence}) and to give
explicit examples of such points (The point $y$ of Corollary \ref{C:ex}
is such a point, for example).

In \cite{BML03} we generalised Theorem \ref{T:t1}
 to a wider class
of $q$-continued fractions, a class which includes
the Rogers-Ramanujan continued fraction and the three
``Ramanujan-Selberg''
 continued fractions studied by Zhang in \cite{Z91}:
{\allowdisplaybreaks
\begin{align}\label{z1}
S_{1}(q):= 1 + \frac{q}{1}
\+
 \frac{q+q^{2}}{1}
\+
 \frac{q^{3}}{1}
\+
 \frac{q^{2}+q^{4}}{1}
\+
\cds ,
\end{align}
}
{\allowdisplaybreaks
\begin{align}\label{z2}
S_{2}(q):=
1 +
 \frac{q+q^{2}}{1}
\+
 \frac{q^{4}}{1}
\+
 \frac{q^{3}+q^{6}}{1}
\+
 \frac{q^{8}}{1}
\+
\cds ,
\end{align}
}
and
{\allowdisplaybreaks
\begin{align}\label{z3}
S_{3}(q):=
1 +
 \frac{q+q^{2}}{1}
\+
 \frac{q^{2}+q^{4}}{1}
\+
 \frac{q^{3}+q^{6}}{1}
\+
 \frac{q^{4}+q^{8}}{1}
\+
\cds .
\end{align}
}
These continued
 fractions were  first studied by Ramanujan  \cite{R57}.
As a corollary to our theorem in \cite{BML03},
we were able to show, for each of the continued fractions above,
 the existence of an uncountable set of points on the unit circle
 at which the continued fraction diverged.

In this present paper we extend our result in \cite{BML01} on the divergence
in the \emph{general}
sense of the Rogers-Ramanujan continued fraction on the unit circle to a wider
class of $q$-continued fractions, a class which includes
$K(q)$,  $S_{1}(q)$,  $S_{2}(q)$  and $S_{3}(q)$.
We show that each of these $q$-continued fractions diverges in the
general sense at an uncountable set of points on the unit circle.

\section{Divergence in the General Sense of $q$-Continued Fractions
on the Unit Circle}

In \cite{J86}, Jacobsen revolutionised the subject of the convergence
of continued fractions by introducing the concept of \emph{general convergence}.
General convergence is defined, see \cite{LW92}, as follows.

Let the $n$-th approximant of the continued fraction
{\allowdisplaybreaks
\begin{equation*}
M=b_{0}+\cfrac{a_{1}}{b_{1} + \cfrac{a_{2}}{b_{2} + \cfrac{a_{3}}{b_{3}+
\cds }}}
\end{equation*}
}
be denoted by $A_{n}/B_{n}$ and let
\[S_{n}(w)= \frac{A_{n}+wA_{n-1}}{B_{n}+wB_{n-1}}.
\]
Define the chordal metric $d$ on $\hat{\mathbb{C}}$ by
\begin{equation}\label{E:d}
d(w,z)=\frac{|z-w|}
{\sqrt{1+|w|^{2}}\sqrt{1+|z|^{2}}}
\end{equation}
when $w$ and $z$ are both finite, and
\[
d(w, \infty) = \frac{1}{\sqrt{1+|w|^{2}}}.
\]
\textbf{Definition:} The continued fraction
$M$ is said to \emph{converge generally} to
$f \in \hat{\mathbb{C}}$ if there exist sequences
$\{v_{n}\}$, $\{w_{n}\} \subset \hat{\mathbb{C}}$ such that
$\liminf d(v_{n},w_{n})>0$ and
\[
\lim_{n \to \infty}S_{n}(v_{n})=\lim_{n \to \infty}S_{n}(w_{n}) = f.
\]
Remark: Jacobson shows in \cite{J86} that, if a continued fraction converges in
the general sense, then the limit is unique.

The idea of general convergence is of great significance because
classical convergence implies general
convergence (take $v_{n}=0$ and $w_{n}= \infty$, for all $n$),
but the converse does not necessarily hold.
General convergence is a natural extension
of the concept of classical convergence for continued fractions.

 We  consider continued fractions of the form
\begin{align}\label{E:cf1}
G(q):=b_{0}(q) +&
K_{n=1}^{\infty}\frac{a_{n}(q)}{b_{n}(q)}\\
:=g_{0}(q^{0})+&
\frac{f_{1}(q^{0})}{g_{1}(q^{0})}
 \+\,\cds \+
\frac{f_{k-1}(q^{0})}{g_{k-1}(q^{0})}
\+
\frac{f_{k}(q^{0})}{g_{0}(q^{1})} \notag \\
\+
&\frac{f_{1}(q^{1})}{g_{1}(q^{1})}
 \+\,\cds \+
\frac{f_{k-1}(q^{1})}{g_{k-1}(q^{1})}
\+
\frac{f_{k}(q^{1})}{g_{0}(q^{2})} \+\notag \\
  \cds \+
\frac{f_{k}(q^{n-1})}{g_{0}(q^{n})}
\+
&\frac{f_{1}(q^{n})}{g_{1}(q^{n})}
 \+\,\cds \+
\frac{f_{k-1}(q^{n})}{g_{k-1}(q^{n})} \+
\frac{f_{k}(q^{n})}{g_{0}(q^{n+1})} \+\cds \notag
\end{align}
where $f_{s}(x), g_{s-1}(x) \in \mathbb{Z}[q][x]$, for $1 \leq s
\leq k$. Thus, for $n\geq 0$ and $1 \leq s \leq k$,
\begin{align}\label{con4ab}
&a_{nk+s}=a_{nk+s}(q)=f_{s}(q^{n}),& &b_{nk+s-1}=b_{nk+s-1}(q)=g_{s-1}(q^{n}).&
\end{align}

Many well-known $q$-continued fractions, including
the Rogers-Ramanujan continued fraction, the three
Ramanujan-Selberg continued fractions studied by Zhang in \cite{Z91}, and the
G\"{o}llnitz-Gordon continued fraction,
\begin{equation}\label{ggcf}
GG(q):=1+q +
 \frac{q^{2}}{1+q^{3}}
\+
 \frac{q^{4}}{1+q^{5}}
\+
 \frac{q^{6}}{1+q^{7}}
\+\,\cds,
\end{equation}
have the form of the continued fraction at \eqref {E:cf1}, with $k$ at most 2.
It seems natural to consider a class of continued fractions which, in a sense,
contains all of the above continued fractions.

For the remainder of the paper $P_{n}(q)/$ $Q_{n}(q)$
denotes the $n$-th approximant of $G(q)$,
$P_{n}/Q_{n}$ if there is no danger of
ambiguity.
For later use, we recall some basic facts about continued fractions.
It is well known (see, for example, \cite{LW92}, p.9)
that the $P_{n}$'s and $Q_{n}$'s
satisfy the following recurrence relations.
{\allowdisplaybreaks
\begin{align}\label{recurrel}
P_{n}&=b_{n}P_{n-1}+a_{n}P_{n-2},\\
Q_{n}&=b_{n}Q_{n-1}+a_{n}Q_{n-2}.\notag
\end{align}
}
It is also well known (see also \cite{LW92}, p.9) that, for $n \geq 1$,
{\allowdisplaybreaks
\begin{align}\label{reclema}
P_{n}Q_{n-1}-P_{n-1}Q_{n-1} &= (-1)^{n-1}\prod_{i=1}^{n}a_{n}.
\end{align}
}
 Condition \ref{con4ab}
 also implies that if $q$ is a
primitive $m$-th root of unity
then $G(q)$ is periodic with period $m\,k$.
Indeed, if $q$ is a
primitive $m$-th root of unity and $j \geq 0$,
\begin{align}\label{perqb}
a_{jmk +r} &= f_{r}(q^{jm}) = f_{r}(q^{0}) = a_{r},
\\
&\phantom{as} \notag \\
b_{jmk +r} &= g_{r}(q^{jm}) = g_{r}(q^{0}) = b_{r}.\notag
\end{align}
We now assume certain facts about the approximants of $G(q)$,
and the convergence behaviour of $G(q)$, at certain roots of unity.

 We assume that there is a positive
integer $d$ and an integer
$s \in \{1,2,$ $\ldots,$ $ d \}$,
such that if
$m \equiv s \mod{d}$  and $(r,m)=1$, then
\begin{align}\label{cond1}
 q=\exp\left(2 \pi i r/m\right)
&\Longrightarrow
\begin{cases}
a_{n}(q) \not = 0, \text{ for } n \geq 1,\\
G(q) \text{  converges and } G(q) \not = 0.
\end{cases}
\end{align}
This integer $s$ will be referred to frequently in what follows.

We further assume that if $G(q)$ converges at
$q =   \exp\left(2 \pi i r/m\right)$, a primitive $m$-th root of unity,
then $G(q)$ converges at any $q' =   \exp\left(2 \pi i r'/m'\right)$,
a primitive $m'$-th root of unity, where $m \equiv m' (\text{mod } d)$
and $r \equiv r' (\text{mod } d)$.

We also
assume that there exists $\eta \in \mathbb{Q} $ such that if
 $H(q):= q^{\eta}/G(q)$ and $G(q)$ converges at
$q =   \exp\left(2 \pi i r/m\right)$
then
\begin{align}\label{cond3}
H\left(\exp\left(2 \pi i r/m\right)\right)
&= H\left(\exp\left(2 \pi i\, r'/
m'\right)\right),
\end{align}
with $r'$ and $m'$ as above.
Note that the above condition implies that $H(q)$ takes only
finitely many values
 at roots of unity. Let these values be denoted
$H_{1}$, $H_{2}$,$\ldots, H_{N_{G}}$.

We assume that for  all
$m \equiv s$ (mod $d$) that
there are  integers $K_{0}$,  $K_{1}$,
$K_{2}$,  $K_{3}$ and $K_{4}$,
depending only on $s$, such that
  {\allowdisplaybreaks
\begin{align}\label{cond6c}
a_{km}(q) &= K_{0}, \phantom{as}
    &P_{km-1}(q) = K_{1},
        &\phantom{Q_{km-1} } \\
Q_{km-2}(q) &= K_{2},
    \phantom{Q_{km-1} }
        &Q_{km-1}(q)P_{km-2}(q) = K_{3},
&\phantom{Q_{km-1} }\notag \\
|Q_{km-1}(q)|&=|P_{km-2}(q)| = K_{4}\notag
\end{align}
}
Here $k$ is the positive integer in the definition of the continued fraction
$G(q)$ at \eqref{E:cf1}.

 Finally, it is also assumed  that there exists
$r \not = u \in \{0,1,\ldots,d-1\}$,
 such that
\begin{align}\label{cond7}
H\left(\exp\left(2 \pi i r/s\right)\right) = H_{a}
&\not = H_{b} =
H\left(\exp\left(2 \pi i u/s\right)\right),
\end{align}
for some $a$, $b \in \{1,\ldots, N_{G}\}$.

\vspace{30pt}

It may be instructive at this point to  show how these abstract
conditions above apply to a particular continued fraction. We let
$G(q)=K(q)$.

If we compare the continued fractions at \eqref{rreq} and \eqref{E:cf1}, it
is clear that we can take $k=1$, $g_{0}(x) \equiv 1$ and $f_{1}(x) \equiv x$ (giving
$b_{n}(q)=g_{0}(q^{n})=1$ and $a_{n}(q)=f_{1}(q^{n}) = q^{n}$).

 From Schur's paper \cite{S17} (or see Table \ref{Ta:t5}, which contains
the relevant information from \cite{S17}) we can take $d=5$ and $s=1$ and if
$q$ is a primitive $m$-th root of unity, $m \equiv 1  \mod{5}$, then
$K(q)$ converges, giving Condition \ref{cond1} above.

 If we set $\eta = 1/5$ and set
$H(q) = q^{1/5}/K(q)$, we have from \eqref{E:ScM} that, if $q$ is a primitive
$m$-th root of unity, $m \not \equiv 0  \mod{5}$,
then
\begin{equation}\label{heq}
H(q)= \frac{q^{( \lambda \sigma  m )/5} }{\lambda K(\lambda)},
\end{equation}
where $\lambda = \left( \frac{m}{5} \right)$ (the Legendre symbol)
and $\sigma$ is the least positive residue of $m \,(\text{mod}\,\, 5)$.
It follows  that $H(q)$ can take only
ten possible values at roots of unity.

 It is also clear from \eqref{heq} that Conditions \ref{cond3}
and \ref{cond7} are satisfied, since if $m \equiv 1  \mod{5}$
(so that $\lambda = \sigma = 1$ and $K(1) =(1+\sqrt{5})/2$ ) and
$q = \exp\left(2 \pi i r/m\right)$, with $(r,m)=1$, then
\[
H(q)=\frac{2\exp(2 \pi i\,r/5)}{1+\sqrt{5}}.
\]

If $q$ is a primitive $m$-th root of unity, it follows from \eqref {rreq}
and \eqref{cond6c} that $K_{0}=a_{m}(q)=q^{m}=1$. It follows from
Schur's paper \cite{S17} (or, once again, from Table \ref{Ta:t5})
that $K_{1}=P_{m-1}(q)=1$, $K_{2}=Q_{m-2}(q)=0$ and
$K_{3}=P_{m-2}(q)Q_{m-1}(q) =q^{(1-m)/5}q^{(m-1)/5}=1=K_{4}$.
Thus Condition \ref{cond6c}
is satisfied.

From the paper of Zhang \cite{Z91}, each of
 $S_{1}(q)$, $S_{2}(q)$ and $S_{3}(q)$ also
satisfy a set of  conditions of the form set out in
\eqref{cond1} to  \eqref{cond7}. The relevant details are found in
Table \ref{Ta:t5}.

\vspace{30pt}

As before, let the regular continued fraction expansion of an
irrational $t \in (0,1)$ be denoted by
$[0,e_{1},e_{2}, \ldots]$ and let the $n$-th approximant of this
continued fraction be denoted by
$c_{n}/d_{n}$. We prove the following theorem.

\begin{theorem}\label{tgen}
Let $G(q)$ have the form given by \eqref{E:cf1} and satisfy conditions
 \eqref{con4ab} and
 \eqref{cond1} --  \eqref{cond7}.

There exists an integer $N'$ and a strictly increasing
function $\gamma: \mathbb{N} \to \mathbb{N}$ such that
if  $t$ is any irrational in $(0,1)$ for which
there exist two subsequences of approximants
$\{c_{f_{n}}/d_{f_{n}}\}$ and $\{c_{g_{n}}/d_{g_{n}}\}$
  satisfying
\begin{align}\label{e:fieqb}
c_{f_{n}} &\equiv r(\text{mod}\,\,d), &c_{g_{n}} \equiv u(\text{mod}\,\,d),\\
d_{f_{n}} &\equiv s(\text{mod}\,\,d), &d_{g_{n}} \equiv s(\text{mod}\,\,d).
\notag
\end{align}
and
\begin{align}\label{E:rconb}
e_{h_{n}+1} > 2 \pi \gamma (k\,N'\,d_{h_{n}}^{2}),
\end{align}
for all $n$, where $h_{n} = f_{n}$ or $g_{n}$. Then
$H(\exp(2 \pi i t))$ does not converge generally.

 Let $S^{\diamond}$ denote the set of all
$t \in (0,1)$ satisfying \eqref{e:fieqb} and \eqref{E:rconb}
and set
\begin{align}\label{G:uconb}
Y_{G} = \{ \exp(2 \pi i t): t \in S^{\diamond} \}.\\
\phantom{as}\notag
\end{align}
Then $Y_{G}$ is an uncountable set of measure zero.
\end{theorem}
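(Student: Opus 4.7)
The plan is to realize $y = \exp(2\pi i t)$ as a limit of the roots of unity $q_n = \exp(2\pi i c_{h_n}/d_{h_n})$ (with $h_n \in \{f_n,g_n\}$), transport the explicit data available at those roots of unity from \eqref{cond1}--\eqref{cond6c} to the nearby point $y$ by means of a polynomial Lipschitz bound on the approximants $P_j(q), Q_j(q)$, and construct two subsequences of M\"obius-transform iterates at $y$ whose limits are the distinct values $H_a \ne H_b$ furnished by \eqref{cond7}. Since general convergence forces a unique limit (Jacobsen), this exhibits the failure of general convergence of $H(y)$, equivalently of $G(y)$.

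The argument proceeds in three steps. First, the Diophantine estimate $|t - c_{h_n}/d_{h_n}| < 1/(d_{h_n}^2 e_{h_n+1})$ together with \eqref{E:rconb} makes $|y - q_n|$ as small as desired. Each $P_j(q), Q_j(q)$ is, by \eqref{recurrel} and \eqref{con4ab}, a polynomial in $q$ whose degree is linear in $j$, so on the unit circle these polynomials have Lipschitz constants that are bounded polynomially in $j$; the function $\gamma$ of the statement is defined to dominate an envelope of these constants together with the reciprocals of the denominator moduli needed to push the bound from $P_j, Q_j$ to the M\"obius transform $S_j$. Second, since $d_{h_n} \equiv s \pmod d$, condition \eqref{cond1} forces $G(q_n)$ to converge, and combined with \eqref{perqb} and \eqref{cond6c} the approximants at indices near $N' k d_{h_n}$ at $q_n$ are computable explicitly in terms of the constants $K_0, \ldots, K_4$ and the value $G(q_n)$. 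This lets us choose $v_n, w_n \in \hat{\mathbb{C}}$, uniformly separated in the chordal metric, so that at $q_n$ the values $S_{N' k d_{h_n}}(v_n)$ and $S_{N' k d_{h_n}}(w_n)$ equal $G(q_n)$ and a second fixed value bounded away from $G(q_n)$, respectively. Third, transporting these identities from $q_n$ to $y$ via the Lipschitz bound of step one, and applying \eqref{cond3} together with the residue hypotheses \eqref{e:fieqb}, the subsequences $\{f_n\}$ and $\{g_n\}$ yield distinct limits $H_a$ and $H_b$ for $H(y)$, which contradicts the uniqueness of the general limit.

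For the set $Y_G$, I would construct $S^\diamond$ in direct analogy with the set $S$ of Theorem~\ref{T:t1}. At each of infinitely many indices one forces the next partial quotient $e_{h_n+1}$ to exceed $2\pi \gamma(k N' d_{h_n}^2)$, while simultaneously arranging the residues of $c_{h_n}$ and $d_{h_n}$ modulo $d$ by suitable choice of the preceding partial quotients; such residues can be realized infinitely often by a standard modular-arithmetic argument on the matrices produced by the continued fraction algorithm. Cantor-style branching at each such stage delivers $2^{\aleph_0}$ admissible $t$, and injectivity of $t \mapsto \exp(2\pi i t)$ on $(0,1)$ makes $Y_G$ uncountable. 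The measure-zero assertion then follows from a Borel--Cantelli argument against the Gauss measure on $(0,1)$, as in Theorem~\ref{T:t1}, since the tail probability that $e_{n+1}$ exceeds such a rapidly growing function of $d_n$ is summable in $n$.

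The principal obstacle is the Lipschitz-transport step: the approximants $P_n(q), Q_n(q)$ can have very large moduli on the unit circle, so one must verify that $|y-q_n|$ being small actually translates into a small perturbation of the M\"obius transform $S_n(w)$ itself, not merely of its numerator and denominator separately. This is exactly where the factor $d_{h_n}^2$ and the integer $N'$ of \eqref{E:rconb} enter, balancing the polynomial Lipschitz growth of $P_j, Q_j$ against the denominator sizes appearing in the explicit witnesses $v_n, w_n$.
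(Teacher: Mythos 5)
Your first and third steps (the Diophantine estimate plus the Lipschitz transport of $P_j(q),Q_j(q)$ from the root of unity $q_n=\exp(2\pi i c_{h_n}/d_{h_n})$ to $y$, and the uncountability and measure-zero arguments) are essentially the paper's Lemma \ref{L:l1}, the estimate \eqref{E:rxydif}, the Cantor-branching remark, and Lemma \ref{L:l3aa}; those parts are fine. The gap is in your second step and its conclusion. You propose to exhibit two uniformly separated sequences $v_n,w_n$ with $S_n(v_n)$ and $S_n(w_n)$ tending to distinct values and then to say this ``contradicts the uniqueness of the general limit.'' Jacobsen's uniqueness theorem applies only to a separated pair whose two sequences share a \emph{common} limit; a single separated pair with two \emph{different} limits is entirely consistent with general convergence. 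Indeed, with $v_n=0$ and $w_n=\infty$ the values $S_n(0)=P_n/Q_n$ and $S_n(\infty)=P_{n-1}/Q_{n-1}$ are the classical approximants, and these can have distinct limit points while the fraction still converges generally, because one of the two sequences may shadow the exceptional sequence $-Q_n/Q_{n-1}$. So producing two subsequential limits $H_a\ne H_b$ of the approximants of $H(y)$ proves classical divergence --- which is the content of Theorem \ref{T:t1} and of \cite{BML03} --- but not the divergence in the general sense that this theorem asserts.

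What is actually needed, and what the paper supplies, is the following. (i) Along each chosen subsequence $n_i=kN'd_{k_i}^2-1$, \emph{two consecutive} approximants converge to the same limit, $P_{n_i}/Q_{n_i}\to L_a$ and $P_{n_i-1}/Q_{n_i-1}\to L_a$; this is precisely why Lemmas \ref{lem9}--\ref{lem10b} track both indices $jkm-1$ and $jkm-2$, a feature absent from your outline. (ii) The ratios $|Q_{n_i}/Q_{n_i-1}|$ are bounded, via \eqref{eqev3} or \eqref{ceq3} transported to $y$ by \eqref{E:rpq1}. (iii) One then argues by contradiction: if $H(y)$ converged generally with witnesses $\{v_n\},\{w_n\}$ and limit corresponding to $g$, choose the subsequence whose limit $L_a$ differs from $g$ (possible because $L_a\ne L_b$, which is the only role the two residue classes $r$ and $u$ play); writing $P_{n_i}=Q_{n_i}(L_a+\epsilon_{n_i})$ and $P_{n_i-1}=Q_{n_i-1}(L_a+\delta_{n_i})$ forces $v_{n_i}+Q_{n_i}/Q_{n_i-1}\to 0$ and $w_{n_i}+Q_{n_i}/Q_{n_i-1}\to 0$, hence $\liminf d(v_n,w_n)=0$, contradicting the definition of general convergence. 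Without (i)--(iii) your argument establishes only classical divergence, so the central claim of the theorem is not reached.
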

We show, as a corollary to this theorem,  for each of the continued fractions
$K(q)$, $S_{1}(q)$, $S_{2}(q)$ and $S_{3}(q)$, that
there exists an uncountable set of points on the unit circle at which the
continued fraction does not converge generally.

The main idea of the proof will be to show that there exist points $y$
on the unit circle for which there exist two sequences of positive integers,
$\{m_{i}\}$ and $\{n_{i}\}$, such that the
 subsequences of approximants to $H(y)$,
$\{P_{n_{i}}/Q_{n_{i}}\}$ and $\{P_{n_{i}-1}/Q_{n_{i}-1}\}$ each tend to
the same limit, $L_{1}$ say, and the subsequences
 $\{P_{m_{i}}/Q_{m_{i}}\}$ and  $\{P_{m_{i}-1}/Q_{m_{i}-1}\}$
each tend to the same limit $L_{2} \not =L_{1}$.
This is done by constructing real numbers $t$
in the interval $(0,1)$ whose continued fraction expansions have a certain
rapid convergence behavior and then setting $y = \exp(2 \pi it)$.
In addition, it is shown that the
sequences $\{Q_{n_{i}}/Q_{n_{i}-1}\}$ and $\{Q_{m_{i}}/Q_{m_{i}-1}\}$
are  bounded from above, for $i$ sufficiently large. These two conditions
are then shown to imply that $H(q)$ does not converge generally at $y$.

We first give some technical lemmas. The proofs are not given
if the results are well known. Our aim is to estimate $P_{i}(q)$ and
$Q_{i}(q)$  for sequences of $i$'s in certain arithmetic progressions.
We use matrix notation since the proofs are
simpler.

\begin{lemma}\label{L:l1}
Let $G(q)$ be as in \eqref{E:cf1}.
There exist strictly increasing sequences of positive integers
$\{\kappa_{n}\}$ and $\{\nu_{n}\}$
  such that if $x$ and $y$ are any
two points on the unit circle then, for all integers  $n \geq 0$,
{\allowdisplaybreaks
\begin{align}\label{E:qdif}
&\phantom{asdaai} \notag\\
&|Q_{n}(x)-Q_{n}(y)| \leq \kappa_{n}|x-y|,
\end{align}
}
and
{\allowdisplaybreaks
\begin{align}\label{E:pdif}
|P_{n}(x)-P_{n}(y)| \leq \nu_{n}|x-y|.
\end{align}
}
\end{lemma}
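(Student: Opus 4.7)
The plan is to argue by induction on $n$, exploiting the fact that each $a_n(q)$ and $b_n(q)$ is a polynomial in $q$ with integer coefficients: by \eqref{con4ab} together with the hypothesis $f_s, g_s \in \mathbb{Z}[q][x]$, the expression $f_s(q^n)$ is a polynomial in $q$ (and likewise for $g_{s-1}(q^n)$). For any polynomial $p(q) = \sum_j c_j q^j$ and any $x,y$ with $|x|, |y| \le 1$ one has $|p(q)| \le \sum_j |c_j|$ and, since $|x^j - y^j| \le j|x-y|$ on the closed unit disk, also $|p(x) - p(y)| \le \bigl(\sum_j j |c_j|\bigr)|x-y|$. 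So for each $n$ I can fix positive integers $A_n, B_n, \alpha_n, \beta_n$ with $|a_n(q)| \le A_n$, $|b_n(q)| \le B_n$, $|a_n(x) - a_n(y)| \le \alpha_n |x-y|$, and $|b_n(x) - b_n(y)| \le \beta_n |x-y|$ whenever $|x|,|y|\le 1$.

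The heart of the argument is an induction that produces $\{\kappa_n\}$ in parallel with a companion sequence of positive integers $\{\mathcal{Q}_n\}$ satisfying $|Q_n(q)| \le \mathcal{Q}_n$ on the closed unit disk. The base cases $Q_{-1} \equiv 0$, $Q_0 \equiv 1$ are trivial. For the inductive step, the recurrence \eqref{recurrel} gives the identity
\begin{align*}
Q_n(x) - Q_n(y) &= b_n(x)\bigl(Q_{n-1}(x) - Q_{n-1}(y)\bigr) + Q_{n-1}(y)\bigl(b_n(x) - b_n(y)\bigr) \\
&\quad + a_n(x)\bigl(Q_{n-2}(x) - Q_{n-2}(y)\bigr) + Q_{n-2}(y)\bigl(a_n(x) - a_n(y)\bigr),
\end{align*}
so the triangle inequality bounds $|Q_n(x) - Q_n(y)|$ by $(B_n \kappa_{n-1} + \mathcal{Q}_{n-1}\beta_n + A_n \kappa_{n-2} + \mathcal{Q}_{n-2}\alpha_n)|x-y|$. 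I then take $\kappa_n$ to be any integer that both majorises this bracket and exceeds $\kappa_{n-1}$, which secures the strict monotonicity of $\{\kappa_n\}$; the companion bound is maintained via $\mathcal{Q}_n := B_n \mathcal{Q}_{n-1} + A_n \mathcal{Q}_{n-2}$, rounded up if needed. Concretely, $\kappa_n := \kappa_{n-1} + 1 + \lceil B_n \kappa_{n-1} + \mathcal{Q}_{n-1}\beta_n + A_n \kappa_{n-2} + \mathcal{Q}_{n-2}\alpha_n \rceil$ works.

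The identical argument applied to $\{P_n\}$, with base cases $P_{-1} \equiv 1$ and $P_0 = b_0(q)$ (itself a polynomial, so contributing its own Lipschitz constant), produces the integer sequence $\{\nu_n\}$ and \eqref{E:pdif}. I do not anticipate any genuine obstacle here: the statement asks only for a pointwise-in-$n$ integer bound, not any uniform control of the growth rate of $\kappa_n, \nu_n$, so no delicate analysis of the degrees of $a_n(q), b_n(q)$ is required. The only mild care is ensuring the constructed bounds are positive integers that strictly increase, which is arranged by rounding up and adding one at every inductive step.
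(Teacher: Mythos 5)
Your proof is correct, but it takes a different route from the paper's. The paper's argument is a one-liner: it observes that $Q_{n}(q)$ is itself a polynomial in $\mathbb{Z}[q]$ (immediate from the recurrences \eqref{recurrel}, since each $a_{n}(q)$ and $b_{n}(q)$ lies in $\mathbb{Z}[q]$ by \eqref{con4ab}), writes $Q_{n}(q)=\sum_{i=0}^{M_{n}}\gamma_{i}q^{i}$, applies $|x^{i}-y^{i}|\leq i\,|x-y|$ termwise to get the Lipschitz constant $\sum_{i}i\,|\gamma_{i}|$, and then secures positivity and strict monotonicity by setting $\kappa_{n}=\max\{\sum_{i}i|\gamma_{i}|,\,1,\,\kappa_{n-1}+1\}$; the same is done for $P_{n}$. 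You instead apply that polynomial Lipschitz bound only to $a_{n}$ and $b_{n}$ and then propagate it through the recurrence by induction, which forces you to carry the auxiliary sup bounds $\mathcal{Q}_{n}$ on $|Q_{n}|$ over the closed unit disk. Both arguments rest on exactly the same elementary inequality $|x^{i}-y^{i}|\leq i|x-y|$, and your inductive bookkeeping (the four-term decomposition of $Q_{n}(x)-Q_{n}(y)$, the companion sequence $\mathcal{Q}_{n}:=B_{n}\mathcal{Q}_{n-1}+A_{n}\mathcal{Q}_{n-2}$, and the rounding-up step that guarantees strictly increasing positive integers) is all sound; the base cases $Q_{-1}\equiv 0$, $Q_{0}\equiv 1$, $P_{-1}\equiv 1$, $P_{0}=b_{0}(q)$ are handled correctly. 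What the paper's version buys is brevity, since it treats $Q_{n}$ as a single polynomial and never needs a bound on $|Q_{n}|$ itself; what yours buys is that the dependence on the recurrence is explicit and one never has to name the coefficients of $Q_{n}$. Since the lemma demands no control on the growth of $\kappa_{n},\nu_{n}$, nothing is lost quantitatively either way.
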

\begin{proof}
Let $\{f_{n}(q)\}$ be any sequence of polynomials in $\mathbb{Z}[q]$.
Suppose $f_{n}(q) = \sum_{i=0}^{M_{n}} \gamma_{i}q^{i}$, where the
$\gamma_{i}$'s are in $\mathbb{Z}$. Then
{\allowdisplaybreaks
\begin{align*}
|f_{n}(x)-f_{n}(y)|&\leq
\sum_{i=1}^{M_{n}}|\gamma_{i}|\left|x^{i}-y^{i}\right|\\
&\phantom{asdaai}\notag \\
&\leq \sum_{i=1}^{M_{n}}i\,|\gamma_{i}||x-y|.
\end{align*}
}
Now set $\delta_{n} =
\max \left\{\,\, \sum_{i=1}^{M_{n}}i\,|\gamma_{i}|,\,\,\, 1,
\,\,\delta_{n-1}+1\right\}$.
Inequality \eqref{E:qdif} follows by setting $f_{n}(q)=Q_{n}(q)$
and $\delta_{n}=\kappa_{n}$.
The result for
\eqref{E:pdif}  follows similarly.
\end{proof}
With $\kappa_{n}$ and $\nu_{n}$ as in the above lemma, define,
for each $n \geq 1$,
\begin{align}\label{gameq}
\gamma(n) &:= \max \{\kappa_{n}, \nu_{n} \}.
\end{align}
This function will be used later in the proof of
Theorem \ref{tgen}.
\begin{lemma}\label{lem5}$($\cite{SVDP92}, p. 238$)$
For $n \geq 0$,
\begin{equation}
\left(
\begin{matrix}
P_{n} &P_{n-1}\\
\phantom{a} &\phantom{a}\\
Q_{n} &Q_{n-1}
\end{matrix}
\right)
=
\left(
\begin{matrix}
b_{0} & 1 \\
\phantom{a} &\phantom{a}\\
1 & 0
\end{matrix}
\right)
\prod_{i=1}^{n}
\left(
\begin{matrix}
b_{i} & 1 \\
\phantom{a} &\phantom{a}\\
a_{i} & 0
\end{matrix}
\right).
\end{equation}
\end{lemma}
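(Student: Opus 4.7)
The plan is to prove the identity by induction on $n$, leveraging the two-term recurrence \eqref{recurrel} that the numerators $P_n$ and denominators $Q_n$ satisfy, together with the standard initialization $P_{-1}=1$, $P_0 = b_0$, $Q_{-1}=0$, $Q_0 = 1$.

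For the base case $n=0$, the product on the right-hand side is empty (equal to the identity matrix), so the claim reduces to
\[
\begin{pmatrix} P_0 & P_{-1}\\ Q_0 & Q_{-1}\end{pmatrix}
=
\begin{pmatrix} b_0 & 1\\ 1 & 0\end{pmatrix},
\]
which is immediate from the initial values. For the inductive step, I would assume the identity holds at stage $n-1$ and then right-multiply both sides by the single factor
\[
\begin{pmatrix} b_n & 1\\ a_n & 0\end{pmatrix}.
\]
On the right-hand side this just extends the product up to index $n$, while on the left-hand side a direct matrix multiplication gives
\[
\begin{pmatrix} P_{n-1} & P_{n-2}\\ Q_{n-1} & Q_{n-2}\end{pmatrix}
\begin{pmatrix} b_n & 1\\ a_n & 0\end{pmatrix}
=
\begin{pmatrix} b_n P_{n-1}+a_n P_{n-2} & P_{n-1}\\ b_n Q_{n-1}+a_n Q_{n-2} & Q_{n-1}\end{pmatrix},
\]
and the $(1,1)$ and $(2,1)$ entries are exactly $P_n$ and $Q_n$ by \eqref{recurrel}.

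There is really no obstacle here: the whole argument is a routine induction, and the only mild subtlety is being explicit about the conventions $P_{-1}=1$, $Q_{-1}=0$ used to anchor the base case. For completeness I would mention that once the identity is established one immediately recovers the standard determinant evaluation
\[
P_n Q_{n-1}-P_{n-1}Q_n = (-1)^{n-1}\prod_{i=1}^{n} a_i
\]
by taking determinants of both sides, which is consistent with \eqref{reclema}.
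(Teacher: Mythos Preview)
Your proof is correct and follows exactly the approach the paper indicates: induction on $n$ using the recurrence relations \eqref{recurrel}, with the standard initial values anchoring the base case. The paper's own proof is simply the one-line remark that the identity follows by induction from \eqref{recurrel}, so you have merely supplied the details.
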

\begin{proof}
This follows, by induction, from the recurrence relations
\eqref{recurrel}.
\end{proof}
We now assume that $q$ is a primitive $m$-th root of unity,
$q=\exp(2 \pi i n/m)$, where $(n,m)= 1$, $m \equiv s \mod{d}$ and either
$n \equiv r \mod{d}$ and $n \equiv u \mod{d}$,
 where $r$, $s$ and $u$ are as in
 condition \eqref{cond7}.
\begin{lemma}\label{lem6}
For $j \geq 1$ and $1 \leq r \leq km$,
\begin{multline}\label{rec1}
\left(
\begin{matrix}
P_{jkm+r} & P_{jkm-1+r}\\
\phantom{a} &\phantom{a}\\
Q_{jkm+r} &Q_{jkm-1+r}
\end{matrix}
\right)
\\
\phantom{as}\\
=
\left(
\begin{matrix}
P_{km-1} & a_{km}P_{km-2}\\
\phantom{a} &\phantom{a}\\
Q_{km-1} &a_{km}Q_{km-2}
\end{matrix}
\right)
\left(
\begin{matrix}
P_{(j-1)km+r} & P_{(j-1)km-1+r}\\
\phantom{a} &\phantom{a}\\
Q_{(j-1)km+r} &Q_{(j-1)km-1+r}
\end{matrix}
\right).
\end{multline}
For $j \geq 1$ and $0 \leq r \leq km -1$,
\begin{align}\label{rec2}
\left(
\begin{matrix}
P_{jkm+r} & \\
\phantom{a} \\
Q_{jkm+r}
\end{matrix}
\right)
&=
\left(
\begin{matrix}
P_{km-1} & a_{km}P_{km-2}\\
\phantom{a} &\phantom{a}\\
Q_{km-1} &a_{km}Q_{km-2}
\end{matrix}
\right)^{j}
\left(
\begin{matrix}
P_{r} \\
\phantom{a} \\
Q_{r}
\end{matrix}
\right).
\end{align}
\end{lemma}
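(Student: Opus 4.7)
The plan is to deduce both matrix identities from Lemma~\ref{lem5} (which expresses the approximant matrix as a product of $2\times 2$ factors) together with the periodicity relations \eqref{perqb} that hold because $q$ is a primitive $m$-th root of unity. Introduce the abbreviations
$A_n := \begin{pmatrix} P_n & P_{n-1} \\ Q_n & Q_{n-1} \end{pmatrix}$,
$M_0' := \begin{pmatrix} b_0 & 1 \\ 1 & 0 \end{pmatrix}$,
$M_i := \begin{pmatrix} b_i & 1 \\ a_i & 0 \end{pmatrix}$ for $i \geq 1$, and
$D := \begin{pmatrix} 1 & 0 \\ 0 & a_{km} \end{pmatrix}$,
so that Lemma~\ref{lem5} reads $A_n = M_0' \prod_{i=1}^n M_i$ and $A_0 = M_0'$. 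A direct entrywise check identifies the left-hand factor on the right of \eqref{rec1} as $T := A_{km-1} D$.

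The crucial algebraic observation is the single identity $M_{km} = D M_0'$, which reduces to $b_{km} = b_0$; this is immediate from \eqref{perqb}, since $q^m = 1$ forces $g_0(q^m) = g_0(q^0) = b_0$. Using this identity to peel one full period off the left end of the matrix product gives $A_{km} = A_{km-1} M_{km} = A_{km-1} D M_0' = T A_0$. More generally, for $1 \leq r \leq km$ and $j \geq 1$, the same manoeuvre, combined with the periodicity reindexing $M_{km + i} = M_i$, yields
\[
A_{jkm+r} \;=\; T M_0' \prod_{i=1}^{(j-1)km+r} M_i \;=\; T A_{(j-1)km+r},
\]
which is exactly \eqref{rec1}.

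Equation \eqref{rec2} then follows by iteration: for $0 \leq r \leq km - 1$, repeated application of \eqref{rec1} collapses the chain to $A_{jkm+r} = T^j A_r$, and extracting the first column gives the claimed single-column identity. I expect the main obstacle --- though a modest one --- to lie in the index bookkeeping around multiples of $km$: one must check via \eqref{con4ab} and \eqref{perqb} that $M_{jkm} = M_{km}$ for every $j \geq 1$ (rather than some undefined ``$M_0$''), and that $M_{jkm+i} = M_i$ for $1 \leq i < km$, so that the factorisation of the long matrix product is legitimate. Beyond that, the proof reduces to the one-line identity $M_{km} = D M_0'$.
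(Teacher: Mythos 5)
Your proof is correct and follows essentially the same route as the paper: both apply Lemma~\ref{lem5} to write $A_{jkm+r}$ as a long matrix product, split it after one full period using the periodicity \eqref{perqb}, and identify the leading block with $\bigl(\begin{smallmatrix} P_{km-1} & a_{km}P_{km-2} \\ Q_{km-1} & a_{km}Q_{km-2} \end{smallmatrix}\bigr)$ before iterating to get \eqref{rec2}. The only (cosmetic) difference is that the paper inserts the identity in the form $\bigl(\begin{smallmatrix} 0 & 1 \\ 1 & -b_{0} \end{smallmatrix}\bigr)\bigl(\begin{smallmatrix} b_{0} & 1 \\ 1 & 0 \end{smallmatrix}\bigr)$ after the $km$-th factor, whereas you split at the $(km-1)$-th factor via $M_{km}=DM_{0}'$ --- the two manipulations are algebraically equivalent.
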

\begin{proof}
By Lemma \ref{lem5} and the periodicity of the
$a_{i}$'s and/or the $b_{i}$'s noted at
 \eqref{perqb}, we have that
{\allowdisplaybreaks
\begin{align}
&\left(
\begin{matrix}
P_{jkm+r} & P_{jkm-1+r}\\
\phantom{a} &\phantom{a}\\
Q_{jkm+r} &Q_{jkm-1+r}
\end{matrix}
\right)
=
\left(
\begin{matrix}
b_{0} & 1 \\
\phantom{a} &\phantom{a}\\
1 & 0
\end{matrix}
\right)
\prod_{i=1}^{jkm+r}
\left(
\begin{matrix}
b_{i} & 1 \\
\phantom{a} &\phantom{a}\\
a_{i} & 0
\end{matrix}
\right)\notag \\
&\phantom{as} \notag \\
&=\left(
\begin{matrix}
b_{0} & 1 \\
\phantom{a} &\phantom{a}\\
1 & 0
\end{matrix}
\right)
\prod_{i=1}^{km}
\left(
\begin{matrix}
b_{i} & 1 \\
\phantom{a} &\phantom{a}\\
a_{i} & 0
\end{matrix}
\right)
\left(
\begin{matrix}
0 & 1 \\
\phantom{a} &\phantom{a}\\
1 & -b_{0}
\end{matrix}
\right)
\left(
\begin{matrix}
b_{0} & 1 \\
\phantom{a} &\phantom{a}\\
1 & 0
\end{matrix}
\right)
\prod_{i=km+1}^{jkm+r}
\left(
\begin{matrix}
b_{i} & 1 \\
\phantom{a} &\phantom{a}\\
a_{i} & 0
\end{matrix}
\right) \notag \\
&\phantom{as} \notag \\
&=\left(
\begin{matrix}
P_{km} & P_{km-1}\\
\phantom{a} &\phantom{a}\\
Q_{km} &Q_{km-1}
\end{matrix}
\right)
\left(
\begin{matrix}
0 & 1 \\
\phantom{a} &\phantom{a}\\
1 & -b_{km}
\end{matrix}
\right)
\left(
\begin{matrix}
b_{0} & 1 \\
\phantom{a} &\phantom{a}\\
1 & 0
\end{matrix}
\right)
\prod_{i=1}^{(j-1)km+r}
\left(
\begin{matrix}
b_{i} & 1 \\
\phantom{a} &\phantom{a}\\
a_{i} & 0
\end{matrix}
\right). \notag
\end{align}
}
Statement \eqref{rec1} then follows
from the facts that $P_{km} = b_{km}P_{km-1}+a_{km}P_{km-2}$ and
$Q_{km} = b_{km}Q_{km-1}+a_{km}Q_{km-2}$
and  Lemma \ref{lem5}. Statement \eqref{rec2} is an immediate
consequence of \eqref{rec1}.
\end{proof}
Remark: It is clear from \eqref{rec2},
 that if $G(q)$ converges then
$Q_{km-1} \not = 0$, since otherwise  $Q_{jkm-1}  = 0$ for $j \geq 1$.

Define
\begin{align}
M &:= \left(
\begin{matrix}
P_{km-1} & a_{km}P_{km-2}\\
\phantom{a} &\phantom{a}\\
Q_{km-1} &a_{km}Q_{km-2}
\end{matrix}
\right).
\end{align}
Equation \eqref{reclema} implies that
\begin{align}\label{detint}
Det(M)=
a_{km}(P_{km-1}Q_{km-2}-P_{km-2}Q_{km-1}) &= (-1)^{km}\prod_{i=1}^{km}a_{i}.
\end{align}
Let $T$ denote the trace of $M$ and $D$ its determinant.
In light of \eqref{detint} and \eqref{cond6c} it is clear that
$T$ and  $D$ are both integers that depend only on $s$.
From this it is clear that
\begin{align}\label{cond8a}
&T^{2}-4\,D = K_{5}, \text{ for some }K_{5} \in \mathbb{Z},
\end{align}
and that $K_{5}$ also depends only on $s$. The eigenvalues of $M$
are
\begin{align}\label{evals}
\lambda_{1} &= \frac{T+\sqrt{T^{2}-4\,D}}{2},\\
\lambda_{2} &= \frac{T-\sqrt{T^{2}-4\,D}}{2}. \notag
\end{align}
The corresponding eigenvectors are
{\allowdisplaybreaks
\begin{align}\label{evecs}
\textbf{x}:=
\left(
\begin{matrix}
x_{1}\\
\phantom{a}\\
x_{2}
\end{matrix}
\right)
&=
\left(
\begin{matrix}
\displaystyle{
\frac{P_{km-1}-a_{km}Q_{km-2}-\sqrt{T^{2}-4\,D}}{2Q_{km-1}}
}\\
\phantom{a}\\
1
\end{matrix}
\right)
\end{align}
}
and
\begin{align}\label{evecsb}
\textbf{y}:=
\left(
\begin{matrix}
y_{1}\\
\phantom{a}\\
y_{2}
\end{matrix}
\right)
&=
\left(
\begin{matrix}
\displaystyle{
\frac{P_{km-1}-a_{km}Q_{km-2}+\sqrt{T^{2}-4\,D}}{2Q_{km-1}}
}
\\ \phantom{a}\\
1
\end{matrix}
\right).
\end{align}

As shown above,  if $G(q)$ converges,
then $Q_{km-1} \not = 0$, and this justifies taking
$x_{2}=y_{2}=1$. Note for later use that $|x_{1}|$, $|y_{1}|$,
$\lambda_{1}$ and  $\lambda_{2}$ depend only on $s$.
This follows from \eqref{cond6c}
 and \eqref{cond8a}.
\begin{lemma}\label{lem8}
Let the eigenvalues of
\[
M = \left(
\begin{matrix}
P_{km-1} & a_{km}P_{km-2}\\
\phantom{a} &\phantom{a}\\
Q_{km-1} &a_{km}Q_{km-2}
\end{matrix}
\right)
\] be $\lambda_{1}$ and
$\lambda_{2}$. If $G(q)$ converges then $\lambda_{1}=\lambda_{2}$ or
$|\lambda_{1}| \not = |\lambda_{2}|$.
\end{lemma}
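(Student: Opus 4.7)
The plan is to prove the contrapositive: assuming $\lambda_1\neq\lambda_2$ and $|\lambda_1|=|\lambda_2|$, deduce that $G(q)$ cannot converge. Since the eigenvalues are distinct, $M$ is diagonalisable and the vectors $\mathbf{x},\mathbf{y}$ of \eqref{evecs}--\eqref{evecsb} form a basis of $\mathbb{C}^2$. For each residue $r\in\{0,1,\ldots,km-1\}$ write $(P_r,Q_r)^{T}=\alpha_r\mathbf{x}+\beta_r\mathbf{y}$. Applying Lemma \ref{lem6}, equation \eqref{rec2}, yields
\[
\frac{P_{jkm+r}}{Q_{jkm+r}}
=\frac{\alpha_r\lambda_1^{j}x_1+\beta_r\lambda_2^{j}y_1}{\alpha_r\lambda_1^{j}+\beta_r\lambda_2^{j}}.
\]
Set $\omega:=\lambda_1/\lambda_2$; by hypothesis $|\omega|=1$ and $\omega\neq1$, so $\omega^{j}$ does not converge in $\mathbb{C}$.

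The next step uses linear independence. Equation \eqref{reclema} combined with the non-vanishing of $a_i(q)$ from \eqref{cond1} shows that the vectors $(P_{km-1},Q_{km-1})^{T}$ and $(P_{km-2},Q_{km-2})^{T}$ are linearly independent, so they cannot both be parallel to a single eigenvector of $M$. Two cases then arise. If some $r$ has $\alpha_r\beta_r\neq0$, dividing the displayed ratio through by $\lambda_2^{j}$ shows that any convergence $P_{jkm+r}/Q_{jkm+r}\to L$ would force $\alpha_r(x_1-L)\omega^{j}+\beta_r(y_1-L)\to 0$, which either makes $\omega^{j}$ convergent (impossible) or forces $x_1=L=y_1$; but $x_1=y_1$ together with $x_2=y_2=1$ would make $\mathbf{x}=\mathbf{y}$, contradicting $\lambda_1\neq\lambda_2$. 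Hence this subsequence of approximants diverges, so $G(q)$ cannot converge. Otherwise every $(P_r,Q_r)^{T}$ is an eigenvector of $M$, and the linear independence forces $(P_{km-1},Q_{km-1})^{T}$ and $(P_{km-2},Q_{km-2})^{T}$ to lie along $\mathbf{x}$ and $\mathbf{y}$ in some order; then the subsequences $P_{jkm+(km-1)}/Q_{jkm+(km-1)}$ and $P_{jkm+(km-2)}/Q_{jkm+(km-2)}$ are constantly $x_1$ and $y_1$ respectively, which are distinct, and the full sequence $P_n/Q_n$ fails to converge.

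The main obstacle is the first case, where one must rule out a conspiracy between $\alpha_r$, $\beta_r$ and the oscillating $\omega^{j}$ that could make the ratio converge. The decisive algebraic point is that $x_1\neq y_1$ (because the normalised second coordinates of $\mathbf{x}$ and $\mathbf{y}$ coincide), which makes the limit equation non-degenerate and, if rearranged, would pin $\omega^{j}$ down to a single value, contradicting $\omega\neq1$ on the unit circle. The remaining work is the routine diagonalisation of the period matrix via Lemma \ref{lem6} together with the observation, via the determinant identity, that two consecutive numerator-denominator pairs are linearly independent.
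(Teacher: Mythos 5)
Your proposal is correct and follows essentially the same route as the paper: decompose $(P_r,Q_r)^T$ in the eigenbasis of the period matrix $M$, use Lemma \ref{lem6} to get the ratio $P_{jkm+r}/Q_{jkm+r}$ in terms of $(\lambda_2/\lambda_1)^j$, rule out mixed coefficients via the non-convergence of a unimodular $\omega^j$, and then use the determinant identity \eqref{reclema} to force two consecutive residue classes onto different eigenvectors, yielding subsequential limits $x_1\neq y_1$. The only cosmetic difference is that the paper runs this last step with the pair $(P_0,Q_0)$, $(P_1,Q_1)$ rather than $(P_{km-1},Q_{km-1})$, $(P_{km-2},Q_{km-2})$.
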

\begin{proof}
Since $a_{i} \not = 0$ for $i \geq 1$,
it follows from  \eqref{reclema},  that
$\text{Det}(M)
\not = 0$, so that
neither of the eigenvalues is zero.

Suppose $\lambda_{1} \not =
\lambda_{2}$ but $|\lambda_{1}| =
|\lambda_{2}|$. In this case it is clear from \eqref{evals}
and \eqref{evecs} that \textbf{x} and \textbf{y} are linearly
independent.
For $r \in \{0,1,2,\dots,km-1\}$, suppose that $(P_{r},Q_{r})^{T} = p_{r}$
\textbf{x}$+q_{r}$ \textbf{y}, for some $p_{r}$, $q_{r} \in \mathbb{C}$.
 Then it follows from \eqref{rec2}, \eqref{evecs} and \eqref{evecsb}   that
\begin{align}\label{mpqb}
\left(
\begin{matrix}
P_{jmk+r}\\
\phantom{a}\\
Q_{jmk+r}
\end{matrix}
\right)
=\left(
\begin{matrix}
 p_{r} \lambda_{1}^{j}x_{1}
 + q_{r}\lambda_{2}^{j}y_{1}\\
\phantom{a}\\
p_{r} \lambda_{1}^{j}
 + q_{r}\lambda_{2}^{j}
\end{matrix}
\right).
\end{align}
By some simple algebraic manipulation,
\begin{align}\label{lameq}
\frac{P_{jkm+r}}{Q_{jkm+r}}=y_{1} + \frac{p_{r}\,
     \left( x_{1}-y_{1}       \right) }
{p_{r}\, +
       q_{r} \,\left(\frac{\lambda_{2}}{\lambda_{1}}\right)^{j} }.
\end{align}
The right hand side does not  converge as $j \to \infty$,
unless $p_{r} = 0$ or $q_{r} = 0$,
for each $r$.

 Since we are considering the case where no $a_{i} = 0$,
then $a_{1} \not = 0$,
$(P_{0},Q_{0}) \not = \gamma (P_{1},Q_{1})$, for any
$\gamma \in \mathbb{C}$. Hence $p_{0}q_{1}-p_{1}q_{0} \not = 0$.

We first consider the case  $p_{0} = 0$.
Then $\lim_{j \to \infty}$ $P_{jkm}$ $/Q_{jkm}$ $ = $ $y_{1}$.
Since $p_{0} = 0$, it follows from the remark above that
 $p_{1}\not = 0$, and then it must be that
$q_{1}= 0$ and $\lim_{j \to \infty}P_{jkm+1}/Q_{jkm+1} = x_{1}
\not = y_{1} $,
which is  a contradiction.

On the other hand, if $p_{0} \not = 0$, then $q_{0} = 0$
and so $q_{1} \not = 0$ which necessitates $p_{1} =0$, and
a similar contradiction follows. This completes the proof.
\end{proof}

Remarks:  1) The eigenvalues for the Rogers-Ramanujan continued fraction and the
Ramanujan-Selberg continued fractions are non-zero and distinct.\\
2) It follows similarly
 from \eqref{lameq}, \eqref{evecs} and \eqref{evecsb},
 that, in the case $|\lambda_{1}| \not = |\lambda_{2}|$,
\begin{equation}
G(q)=
\begin{cases}
y_{1}
=
\displaystyle{
\frac{P_{km-1}-a_{km}Q_{km-2}+\sqrt{T^{2}-4\,D}}{2Q_{km-1}}
},\,\,\,\, |\lambda_{1}| < |\lambda_{2}|,\\
x_{1}=
\displaystyle{
\frac{P_{km-1}-a_{km}Q_{km-2}-\sqrt{T^{2}-4\,D}}{2Q_{km-1}}
},\,\,\,\,|\lambda_{1}| > |\lambda_{2}|.
\end{cases}
\end{equation}
For later use we evaluate $G(q)$  when
 $\lambda_{1}  =\lambda_{2}$. In this case $T^{2}-4\,D = 0$.
This equation implies
\begin{equation}\label{pqeq}
P_{km-2}=-\frac{(P_{km-1}-a_{km}Q_{km-2})^{2}}
{4\,a_{km}\,Q_{km-1}}.
\end{equation}
This in turn means that $P_{km-1}\not =a_{km}Q_{km-2}$, or else
$P_{km-2}=0$ and \eqref{rec2} gives that $P_{jkm-2}=0$ for $j\geq 1$,
implying that $G(q)=0$, contradicting our assumption.

For ease of notation we write $P_{km-1}=a$, $Q_{km-1}=c$ and
$a_{km}Q_{km-2} =d$. Then it follows from Lemma \ref{lem6} and  induction
that
{\allowdisplaybreaks
\begin{multline}\label{eqevals}
\left(
\begin{matrix}
P_{jkm-1} & a_{km}P_{jkm-2}\\
\phantom{a} &\phantom{a}\\
Q_{jkm-1} &a_{km}Q_{jkm-2}
\end{matrix}
\right)=\left(
\begin{matrix}
P_{km-1} & a_{km}P_{km-2}\\
\phantom{a} &\phantom{a}\\
Q_{km-1} &a_{km}Q_{km-2}
\end{matrix}
\right)^{j} \\
\phantom{as}  \\
=\frac{(a+d)^{j-1}}{2^{j+1}c}
\left(
\begin{matrix}
(j(a-d)+a+d)2\,c & -j(a-d)^{2}\\
\phantom{a} &\phantom{a}\\
4\,j\,c^{2} &(j(d-a)+a+d)2c
\end{matrix}
\right).
\end{multline}
}
\vspace{5pt}
From this and \eqref{rec2} it follows that, for $0 \leq r \leq km-1$,
{\allowdisplaybreaks
\begin{align*}
\frac{P_{jkm+r}}{Q_{jkm+r}}&=
\frac{(j(a-d)+a+d) 2\,c\,P_{r}-j(a-d)^{2}Q_{r}}
{4\,j\,c^{2}P_{r}+(j(d-a)+a+d )2\,c\,Q_{r}}\\
&=
\frac{((a-d)+(a+d)/j) 2\,c\,P_{r}-(a-d)^{2}Q_{r}}
{4\,c^{2}P_{r}+((d-a)+(a+d)/j )2\,c\,Q_{r}},
\end{align*}
}
and that
{\allowdisplaybreaks
\begin{align}\label{eqlim}
&G(q)=\lim_{j \to \infty}\frac{P_{jkm+r}}{Q_{jkm+r}}
=\frac{a-d}{2c}=\frac{P_{km-1}-a_{km}Q_{km-2}}{2Q_{km-1}}.
\end{align}
}

This holds whether or not  $2\, c\,P_{r}-(a-d)Q_{r}  = 0$,
for any $r \in\{0,1,\ldots,km-1\}$.

Note that   \eqref{cond6c}  implies that $|G(q)|$
depends only on $s$.

In the following lemmas the  cases of equal and unequal eigenvalues
are considered
separately, with Lemmas \ref{lem9} to \ref{lem10} dealing
with the case of equal eigenvalues.

 Define $G_{n}(q) := P_{n}(q)/Q_{n}(q)$ and
$H_{n}(q):=x^{\eta}/G_{n}(q)$, where $\eta$ is as defined at \eqref{cond3}.

   For the case
of equal eigenvalues, it follows from \eqref{evals} that
$T=P_{km-1}+a_{km}Q_{km-2} \not = 0$.
Note also that the conditions at  \eqref{cond6c} imply that
$P_{km-1}-a_{km}Q_{km-2}=K_{1}-K_{0}\,K_{2}$, a fixed integer
depending only on $s$.

In the following lemmas a sequence of positive integers
$N_{1},\ldots, N_{10}$ and  a sequence of constants
is defined. These integers and constants will depend only on
the constants $K_{0}$, $K_{1}$, $K_{2}$ and $K_{3}$
described at  \eqref{cond6c}. As such, they will
depend only on $s$. To avoid repetition throughout
the lemmas, we state here that these integers are chosen to
satisfy $N_{1}<N_{3}<N_{4}<N_{7}<N_{8}$
and $N_{2}<N_{5}<N_{6}<N_{9}<N_{10}$.
For Lemmas  \ref{lem9} to \ref{lem10}, we assume that
$\lambda_{1}  =\lambda_{2}$.
\begin{lemma}\label{lem9}
There exist positive constants $D_{1}$, $D_{2}$, $D_{3}$, $D_{4}$,
$D_{5}$ and  $D_{6}$,
each depending only on $s$,
 such that  if $j \geq 1$, then
\begin{align}\label{eqev1}
|Q_{jkm-1}|&= j\,D_{2}\,D_{1}^{j}.
\end{align}
There exists a  positive integer $N_{1}$,
 depending only on $s$, such that  if $j \geq N_{1}$, then
\begin{align}\label{eqev2}
D_{3}\,j\,D_{1}^{j}\leq |Q_{jkm-2}|
&\leq D_{4}\,j\,D_{1}^{j}
\end{align}
and
\begin{align}\label{eqev3}
D_{5}\leq
\left|\frac{Q_{jkm-1}}{Q_{jkm-2}}\right|\leq
D_{6}.
\end{align}
\end{lemma}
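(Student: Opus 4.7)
The plan is to read off explicit closed forms for $Q_{jkm-1}$ and $a_{km}Q_{jkm-2}$ directly from the matrix identity \eqref{eqevals}, then extract the stated estimates by elementary bounds, with all constants packaged so as to depend only on $s$ via the quantities $K_{0},K_{1},K_{2},K_{3},K_{4}$ of \eqref{cond6c}.

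Write $a=P_{km-1}=K_{1}$, $c=Q_{km-1}$ (so $|c|=K_{4}$), and $d=a_{km}Q_{km-2}=K_{0}K_{2}$, so that $a$, $d$, $a+d$ and $a-d$ are integers depending only on $s$. Reading the $(2,1)$ and $(2,2)$ entries of \eqref{eqevals} gives the exact formulas
\begin{align*}
Q_{jkm-1} &= \frac{jc(a+d)^{j-1}}{2^{j-1}},\\
a_{km}Q_{jkm-2} &= \frac{(a+d)^{j-1}\bigl(j(d-a)+(a+d)\bigr)}{2^{j}}.
\end{align*}
Setting $D_{1}:=|a+d|/2$ (which is positive, since the equal-eigenvalue case forces $T=a+d\neq 0$), the first formula immediately yields $|Q_{jkm-1}|=j\,D_{2}\,D_{1}^{j}$ with $D_{2}:=K_{4}/D_{1}$, giving \eqref{eqev1}.

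For \eqref{eqev2}, I use the observation made in the text just before the lemma: the equal-eigenvalue case requires $P_{km-1}\neq a_{km}Q_{km-2}$, hence $|a-d|>0$. Write
\begin{equation*}
|Q_{jkm-2}| = \frac{D_{1}^{\,j-1}}{2|K_{0}|}\,\bigl|j(d-a)+(a+d)\bigr|.
\end{equation*}
Choose $N_{1}$ large enough (depending only on $s$) that $j\geq N_{1}$ implies $|a+d|\leq \tfrac{1}{2}j|d-a|$; then by the triangle inequality
\begin{equation*}
\tfrac{1}{2}j|d-a| \;\leq\; \bigl|j(d-a)+(a+d)\bigr| \;\leq\; \tfrac{3}{2}j|d-a|,
\end{equation*}
which gives \eqref{eqev2} with $D_{3}:=|d-a|/(4|K_{0}|D_{1})$ and $D_{4}:=3|d-a|/(4|K_{0}|D_{1})$.

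Finally, \eqref{eqev3} follows at once by dividing: for $j\geq N_{1}$,
\begin{equation*}
\frac{D_{2}}{D_{4}}\;\leq\;\left|\frac{Q_{jkm-1}}{Q_{jkm-2}}\right|\;=\;\frac{j\,D_{2}\,D_{1}^{j}}{|Q_{jkm-2}|}\;\leq\;\frac{D_{2}}{D_{3}},
\end{equation*}
so $D_{5}:=D_{2}/D_{4}$ and $D_{6}:=D_{2}/D_{3}$ work. I do not anticipate a real obstacle here: once the formulas from \eqref{eqevals} are in hand, the only subtlety is the nonvanishing of $a-d$ (handled by the remark preceding the lemma) and the nonvanishing of $a+d=T$ (which is exactly the equal-eigenvalue hypothesis $T^{2}=4D$ together with $D\neq 0$, the latter following from \eqref{detint} and $a_{n}\neq 0$). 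All constants have been written in terms of $K_{0},\ldots,K_{4}$ and hence depend only on $s$, as required.
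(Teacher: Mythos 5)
Your proposal is correct and follows essentially the same route as the paper: both read the exact formulas for $Q_{jkm-1}$ and $a_{km}Q_{jkm-2}$ off the matrix power identity \eqref{eqevals} (your $(a+d)/2$ is exactly the paper's $\lambda_{1}$), use the nonvanishing of $T=a+d$ and of $P_{km-1}-a_{km}Q_{km-2}$ established just before the lemma, choose $N_{1}$ to absorb the $O(1/j)$ term, and divide to get \eqref{eqev3}. The only difference is the immaterial packaging of the constants $D_{3},D_{4}$ (your $\tfrac12,\tfrac32$ triangle-inequality split versus the paper's slightly different but equivalent choices).
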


\begin{proof}
To prove \eqref{eqev1}, we first equate entries  at
\eqref{eqevals}, using the fact that
$\lambda_{1}=\lambda_{2}= (P_{km-1}+a_{km}Q_{km-2})/2$.
\begin{align*}
Q_{jkm-1}&= j\,Q_{km-1}\,\lambda_{1}^{j-1},
\end{align*}
\eqref{eqev1} follows upon setting $D_{1}=|\lambda_{1}|$ and
$D_{2} = |Q_{km-1}/\lambda_{1}|$, recalling the  conditions   at
\eqref{cond6c} and
 the facts noted at the end of \eqref{evecsb}.

Note for later use that, since $M$ has determinant equal to a
non-zero integer and  has two  equal eigenvalues, then $D_{1}\geq 1$
so that $\lim_{j \to \infty}|Q_{jkm-1}| = \infty$. Inequality \eqref{eqev3} then
implies $\lim_{j \to \infty}|Q_{jkm-1}| = \infty$ also.

Statement
\eqref{eqev2} follows similarly from comparing corresponding matrix elements
 at
\eqref{eqevals}, namely,
\begin{align*}
Q_{jkm-2} = \left(a_{km}Q_{km-2}-P_{km-1}+\frac{2\lambda_{1}}{j}\right)
\frac{j\lambda_{1}^{j-1}}{2a_{km}}.
\end{align*}
Set
\begin{align*}
D_{4} &= \frac{2|\lambda_{1}|+|a_{km} Q_{km-2} -P_{km-1}|}
{2|a_{km}\lambda_{1}|}.
\end{align*}
 Take $N_{1}$ large enough so that
$2|\lambda_{1}|/N_{1} < |a_{km} Q_{km-2} -P_{km-1}|$ and then set
\begin{align*}
D_{3} = \frac{|a_{km} Q_{km-2} -P_{km-1}|-2|\lambda_{1}|/N_{1}}
{2|a_{km}\lambda_{1}|}.
\end{align*}
Statement \eqref{eqev3} follows from \eqref{eqev1} and \eqref{eqev2}, by taking
$D_{5}= D_{2}/D_{4}$ and $D_{6}= D_{2}/D_{3}$.
\end{proof}

\begin{lemma}\label{lem99}
There exists  positive constants $D_{7}$, $D_{8}$, $D_{9}$,
$D_{10}$ and $D_{11}$
and  positive integers $N_{3}$ and $N_{4}$,
each depending only on
$s$, such that, if  $j \geq 1$, then
{\allowdisplaybreaks
\begin{align}\label{cn1}
 \left|G(q) - G_{jkm-1}(q)\right| =  \frac{D_{7}}{j},
\end{align}
}
if $j \geq N_{3}$, then
{\allowdisplaybreaks
\begin{align}\label{cn2}
\frac{D_{8}}{j}&\leq \left|G(q) - G_{jkm-2}(q)\right|\leq \frac{D_{9}}{j},
\end{align}
}
and if $j \geq N_{4}$ and $n = jkm-1$ or $jkm-2$ , then
{\allowdisplaybreaks
\begin{align}\label{flrn}
\frac{D_{10}}{j}&\leq \left|H(q) - H_{n}(q)\right|\leq \frac{D_{11}}{j}.
\end{align}
}
\end{lemma}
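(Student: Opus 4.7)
The plan is to read everything off directly from the explicit matrix identity \eqref{eqevals}, using the shorthand $a=P_{km-1}$, $c=Q_{km-1}$, $d=a_{km}Q_{km-2}$ already set up in the paper, and to compare with the closed form $G(q)=(a-d)/(2c)$ given by \eqref{eqlim}. Once the two ratios $P_{jkm-1}/Q_{jkm-1}$ and $P_{jkm-2}/Q_{jkm-2}$ are expressed in elementary fashion as rational functions of $j$ with coefficients in $a,c,d$, the estimates \eqref{cn1} and \eqref{cn2} are essentially forced; the estimate \eqref{flrn} will then come from purely algebraic manipulation of $H(q)-H_n(q)$.

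First I would dispose of \eqref{cn1}. The first column of the matrix identity \eqref{eqevals} gives, after dividing numerator by denominator, $G_{jkm-1}(q)=(j(a-d)+a+d)/(2jc)=(a-d)/(2c)+(a+d)/(2jc)$, so $G(q)-G_{jkm-1}(q)=-(a+d)/(2jc)=-\lambda_1/(jc)$. Since \eqref{cond6c} fixes $|c|=K_4$ and $|\lambda_1|=|T|/2$ in terms of $K_0,K_1,K_2,K_3$, the constant $D_7:=|\lambda_1|/|c|$ depends only on $s$ and \eqref{cn1} holds as an equality for every $j\ge 1$.

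For \eqref{cn2} I would repeat the computation using the second column of \eqref{eqevals}; dividing $a_{km}P_{jkm-2}$ by $a_{km}Q_{jkm-2}$ yields $G_{jkm-2}(q)=(a-d)^2/\bigl(2c[(a-d)-(a+d)/j]\bigr)$, whence a short algebraic manipulation gives $G(q)-G_{jkm-2}(q)=-(a-d)(a+d)/\bigl(2jc[(a-d)-(a+d)/j]\bigr)$. The remark preceding Lemma \ref{lem9} guarantees $a-d=K_1-K_0K_2\neq 0$, so I would choose $N_3$ large enough that $|(a+d)/j|\le |a-d|/2$ whenever $j\ge N_3$; this pins the bracket into the range $[|a-d|/2,\,3|a-d|/2]$ and immediately yields the two-sided bound \eqref{cn2} with $D_8,D_9$ depending only on $s$.

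Finally, for \eqref{flrn} I would apply the identity $H(q)-H_n(q)=q^{\eta}(G_n(q)-G(q))/(G(q)G_n(q))$. On the unit circle $|q^{\eta}|=1$; moreover $|G(q)|$ depends only on $s$ and is nonzero by \eqref{cond1}, while \eqref{cn1} and \eqref{cn2} already show $G_n(q)\to G(q)$. Hence I can pick $N_4\ge\max(N_1,N_3)$ so large that $|G(q)|/2\le |G_n(q)|\le 2|G(q)|$ for $n=jkm-1,jkm-2$ and $j\ge N_4$, which sandwiches $|G(q)G_n(q)|$ between two positive constants depending only on $s$; combined with \eqref{cn1}--\eqref{cn2} this gives \eqref{flrn}. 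The main obstacle is not a technical one but the bookkeeping requirement that every constant $D_i$ and every threshold $N_i$ depend only on $s$; this is precisely what \eqref{cond6c} was designed to provide, since $T$, $D$, and $a-d$ are all polynomial expressions in $K_0,K_1,K_2,K_3$.
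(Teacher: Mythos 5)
Your proposal is correct and follows essentially the same route as the paper: reading the approximants off the explicit matrix power \eqref{eqevals}, comparing with the limit \eqref{eqlim}, and doing the same elementary algebra to get the $1/j$ rates, then passing to $H$ via $H(q)-H_{n}(q)=q^{\eta}(G_{n}(q)-G(q))/(G(q)G_{n}(q))$ with $|G_{n}(q)|$ sandwiched near $|G(q)|$ for large $j$. The only nitpick is that the nonvanishing of $a-d=P_{km-1}-a_{km}Q_{km-2}$, which you need for the lower bound in \eqref{cn2}, is established in the remark following \eqref{pqeq} (it would force $G(q)=0$ otherwise), not merely in the remark preceding Lemma \ref{lem9}.
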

\begin{proof}
Equations \eqref{eqevals} and \eqref{eqlim} give that
{\allowdisplaybreaks
\begin{align*}
|G(q)& - G_{jkm-1}(q)|\\
&=
\left| \frac{P_{km-1}-a_{km}Q_{km-2}}{2Q_{km-1}}
-\frac{(j+1)P_{km-1}-(j-1)a_{km}Q_{km-2}}{2\,j\,Q_{km-1}}\right|\\
&=\left|\frac{P_{km-1}+a_{km}Q_{km-2}}{2\,j\,Q_{km-1}}\right|.
\end{align*}
}
Set
{\allowdisplaybreaks
\begin{align*}
D_{7}&= \left| \frac{P_{km-1}+a_{km}Q_{km-2}}{2Q_{km-1}}\right|.
\end{align*}
}
Note that $D_{7} \not = 0$, since $D_{7}=|\lambda_{1}/ Q_{km-1}|\not = 0$.
Next, from \eqref{eqevals} and \eqref{eqlim}
we find that
{\allowdisplaybreaks
\begin{align*}
&\left|G(q) - G_{jkm-2}(q)\right|\\
&\phantom{as}\\
& =
\left| \frac{P_{km-1}-a_{km}Q_{km-2}}{2Q_{km-1}}+
\frac{j(P_{km-1}-a_{km}Q_{km-2})^{2}}
{2\,Q_{km-1}((j+1)a_{km}Q_{km-2}-(j-1)P_{km-1})}\right|\\
&\phantom{as}\\
&=\left|\frac{-P_{km-1}^{2}+a_{km}^{2}Q_{km-2}^{2}}{2\,Q_{km-1}
(-(P_{km-1}+a_{km}Q_{km-2})/j+(P_{km-1}-a_{km}Q_{km-2})
)}\right|
\frac{1}{j}.
\end{align*}
}
Choose $N_{3}$ such that
$|(P_{km-1}+a_{km}Q_{km-2})/N_{3}|<|(P_{km-1}-a_{km}Q_{km-2})|$
and set
{\allowdisplaybreaks
\begin{align*}
D_{8}&= \frac{|-P_{km-1}^{2}+a_{km}^{2}Q_{km-2}^{2}|}
{2\,|Q_{km-1}|
(|P_{km-1}+a_{km}Q_{km-2}|+|P_{km-1}-a_{km}Q_{km-2}|)
}
\end{align*}
}
and
{\allowdisplaybreaks
\begin{align*}
D_{9}&= \frac{|-P_{km-1}^{2}+a_{km}^{2}Q_{km-2}^{2}|}
{2\,|Q_{km-1}|
(|P_{km-1}-a_{km}Q_{km-2}|-|P_{km-1}+a_{km}Q_{km-2}|/N_{3})
}.
\end{align*}
}
Note that neither $D_{8}$ or $D_{9}$ is zero, since
$\lambda_{1}=(P_{km-1}+a_{km}Q_{km-2})/2 \not = 0$
and $P_{km-1}-a_{km}Q_{km-2} \not = 0$ from the remark following
\eqref{pqeq}.

Let $n = jkm-1$ or $jkm-2$ and set $M^{'}=\max\{D_{7},D_{9}\}$
and $m^{'}=\min\{D_{7},D_{8}\}$. Choose $N_{4}$ such that
$|G(q)|>M^{'}/N_{4}$ (Recall that $|G(q)| \not = 0$ and is constant
for fixed $s$). Let $j \geq N_{4}$. Then
{\allowdisplaybreaks
\begin{align*}
 \left|H(q) - H_{n}(q)\right|&=
\left|\frac{q^{\eta}}{G(q)}-\frac{q^{\eta}}{G_{n}(q)}\right|
=\left|\frac{G(q)-G_{n}(q)}{G(q)G_{n}(q)}\right|.
\end{align*}
}
Then
{\allowdisplaybreaks
\begin{align*}
 \frac{m^{'}}{j\,|G(q)\,G_{n}(q)|} &\leq \left|H(q) - H_{n}(q)\right|
\leq \frac{M^{'}}{j\,|G(q)\,G_{n}(q)|}.
\end{align*}
}
By the definition of $M'$ and the choice of $N_{4}$, it follows that
{\allowdisplaybreaks
\begin{align*}
\frac{m^{'}}{j\,|G(q)|\,(|G(q)|+M^{'})}
&\leq\left|H(q) - H_{n}(q)\right|
\leq \frac{M^{'}}{j\,|G(q)|(|G(q)|-M^{'}/N_{4})}.
\end{align*}
}
Set
{\allowdisplaybreaks
\begin{align*}
D_{10} &= \frac{m^{'}}{|G(q)|\,(|G(q)|+M^{'})},
\hspace{15pt}
D_{11}=\frac{M^{'}}{|G(q)|(|G(q)|-M^{'}/N_{4})}.
\end{align*}
}
The constants $D_{10}$ and $D_{11}$ depend
only on $s$, since $|G(q)|$, $m'$, $M'$ and $N_{4}$ depend only on $s$ .
\end{proof}

\begin{lemma}\label{lem10}
 Let $y$ be  another point on the unit circle.
There exist  positive constants $D_{13}$, $D_{14}$
and $D_{15}$ and  positive integers
$N_{7}$ and $N_{8}$, each depending only on  $s$, such that if
$j \ge N_{7}$ and
 $n=jkm-1$ or $jkm-2$,
and
{\allowdisplaybreaks
\begin{align*}
&P_{n}(y)= P_{n}(q) + \epsilon_{1},\,\,\,\,
Q_{n}(y)= Q_{n}(q) + \epsilon_{2}, \,\,\,\,
\epsilon = \max \{|\epsilon_{1}|,|\epsilon_{2}|\} < 1/2,
\end{align*}
}
 then
{\allowdisplaybreaks
\begin{align}\label{E:kns}
\left| G_{n}(y) - G_{n}(q) \right| \leq \frac{D_{13} \epsilon}{j};
\end{align}
}
if $j \geq N_{8}$ and  the angle between $q$ and $y$
(measured from the origin)
 is less than
$\pi/(2 |\eta|)$,
then
{\allowdisplaybreaks
\begin{equation}\label{E:rns}
|H_{n}(y)-H_{n}(q)| < D_{14}|q-y| + D_{15}\frac{ \epsilon}{ j}
\end{equation}
}
and
{\allowdisplaybreaks
\begin{equation}\label{E:rnr}
|H_{n}(y) - H(q)| \leq  D_{14}|q-y| + D_{15}\frac{ \epsilon}{ j}
+\frac{D_{11}}{j}.
\end{equation}
}
\end{lemma}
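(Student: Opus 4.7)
The plan is to prove \eqref{E:kns}, \eqref{E:rns}, \eqref{E:rnr} in order, each being a direct computation built on Lemma \ref{lem9} (which controls $|Q_n(q)|$ from below) together with the fact noted after \eqref{eqlim} that $|G(q)|$ is a positive constant depending only on $s$, so $G_n(q)\to G(q)$ is bounded above and below for $j$ sufficiently large.

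For \eqref{E:kns}, I would combine the two approximants into a single fraction,
\[
G_n(y)-G_n(q)=\frac{P_n(y)Q_n(q)-P_n(q)Q_n(y)}{Q_n(y)\,Q_n(q)}=\frac{\epsilon_1 Q_n(q)-\epsilon_2 P_n(q)}{Q_n(y)\,Q_n(q)}.
\]
Lemma \ref{lem9} gives $|Q_n(q)|\geq D_3\,j\,D_1^{\,j}$ for $j$ large (recall $D_1\geq 1$), and since $G_n(q)$ tends to the constant $G(q)$ we also have $|P_n(q)|\leq C\,|Q_n(q)|$ for some $C$ depending only on $s$. The elementary perturbation bound $|Q_n(y)|\geq |Q_n(q)|-\epsilon\geq \tfrac{1}{2}|Q_n(q)|$ (valid once $|Q_n(q)|\geq 1$) controls the denominator from below. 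Putting this together and using $1/(jD_1^{\,j})\leq 1/j$ yields $|G_n(y)-G_n(q)|\leq D_{13}\epsilon/j$ with $D_{13}$ depending only on $s$; choose $N_7$ large enough that every threshold just invoked is met.

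For \eqref{E:rns}, introduce the intermediate term $y^\eta/G_n(q)$ to split
\[
H_n(y)-H_n(q)=y^\eta\,\frac{G_n(q)-G_n(y)}{G_n(y)\,G_n(q)}+\frac{y^\eta-q^\eta}{G_n(q)}.
\]
Taking $N_8\geq N_7$ large enough that $|G_n(q)|,|G_n(y)|\geq |G(q)|/2>0$, the first term is at most $C\,|G_n(y)-G_n(q)|\leq C D_{13}\epsilon/j$ by \eqref{E:kns} (using $|y^\eta|=1$), producing the $D_{15}\epsilon/j$ contribution. For the second term, the hypothesis that the angle between $q$ and $y$ is less than $\pi/(2|\eta|)$ lets us pick a single branch of the $\eta$-th power on the short arc; writing $y=q\,e^{i\theta}$ with $|\theta|\leq (\pi/2)|q-y|$ (arc-versus-chord on the unit circle), we obtain
\[
|y^\eta-q^\eta|=|q^\eta|\,|e^{i\eta\theta}-1|\leq |\eta\theta|\leq \tfrac{\pi}{2}|\eta|\,|q-y|,
\]
and dividing by $|G_n(q)|\geq |G(q)|/2$ gives the $D_{14}|q-y|$ term, with $D_{14}$ depending only on $\eta$ and $|G(q)|$, hence only on $s$.

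Finally, \eqref{E:rnr} is just the triangle inequality $|H_n(y)-H(q)|\leq |H_n(y)-H_n(q)|+|H_n(q)-H(q)|$, bounded by \eqref{E:rns} and by \eqref{flrn} of Lemma \ref{lem99} respectively (taking $N_8\geq N_4$). I do not expect a real obstacle: the entire argument is estimation, and the only thing requiring care is the bookkeeping that every constant depends only on $s$, which reduces to the already-recorded facts that $|G(q)|$, $D_1=|\lambda_1|$, and the integers $K_0,\dots,K_4$ of \eqref{cond6c} are determined by $s$. The angle condition is used only to single-value $y^\eta-q^\eta$ and to apply the arc-chord comparison.
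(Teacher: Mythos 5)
Your proposal is correct and follows essentially the same route as the paper: the same single-fraction decomposition of $G_n(y)-G_n(q)$ with the lower bound on $|Q_n(q)|$ from Lemma \ref{lem9}, the same splitting of $H_n(y)-H_n(q)$ into a $(y^{\eta}-q^{\eta})$ term (controlled via the angle hypothesis and arc--chord comparison) plus a $(G_n(y)-G_n(q))$ term, and the triangle inequality with \eqref{flrn} for \eqref{E:rnr}. The only bookkeeping detail is that the lower bound on $|Q_n(q)|$ must cover both cases $n=jkm-1$ and $n=jkm-2$, i.e.\ one should use $\min\{D_2,D_3\}$ rather than $D_3$ alone, which your ``choose $N_7$ large enough'' clause absorbs.
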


\begin{proof}
Let
{\allowdisplaybreaks
\begin{align*}
D_{12} = \max \{D_{7}, D_{9}\},
\hspace{25pt}
D_{2}'= \min\{D_{2}, D_{3}\},
\hspace{25pt}
N_{7} \geq \left \lceil \frac{1}{D_{2}'} \right \rceil.
\end{align*}
}
Set $D_{13}' = 3 + |G(q)|+D_{12}/N_{7}$ and set $D_{13}= 2 D_{13}'/D_{2}'$.
Choose $N_{8}$ such that
{\allowdisplaybreaks
\begin{align*}
\min \left \{
|G(q)|-\frac{D_{12}}
         {N_{8}},
|G(q)|-
\displaystyle{
    \frac{D_{12}}
         {N_{8}}}
-\displaystyle{
\frac{D_{13}}
     {2N_{8}}}
\right  \} \geq \frac{|G(q)|}{2}.
\end{align*}
}
From the fact that $D_{1}\geq 1$ together with \eqref{eqev1} and
\eqref{eqev2}, it follows that
{\allowdisplaybreaks
\begin{align*}
|Q_{n}|&\geq D_{2}'\,j\,D_{1}^{j}\geq D_{2}'\,j.
\end{align*}
}
 Let $j \geq  N_{7}$.
Then $ |Q_{n}|-1/2 > D_{2}'\,j-1/2 \geq D_{2}'\,j/2$ and
{\allowdisplaybreaks
\begin{align*}
\left| G_{n}(y) - G_{n}(q) \right|
&=\left|\frac{P_{n}(y)}{Q_{n}(y)}-\frac{P_{n}(q)}{Q_{n}(q)}\right|
=\left|\frac{\epsilon_{1}Q_{n}(q)-\epsilon_{2}P_{n}(q)}
       {Q_{n}(q)(Q_{n}(q) + \epsilon_{2})}\right|\\
&\phantom{asd}\\
&\leq \frac{|\epsilon_{1}-\epsilon_{2}|}{|Q_{n}(q) + \epsilon_{2}|}
    +\frac{|\epsilon_{2}||P_{n}(q)-Q_{n}(q)|}
    {|Q_{n}(q)||Q_{n}(q)+\epsilon_{2}|}\\
&\phantom{asd}\\
&= \frac{|\epsilon_{1}-\epsilon_{2}|}{|Q_{n}(q) + \epsilon_{2}|}
    +\frac{|\epsilon_{2}||G_{n}(q)-1|}
    {|Q_{n}(q)+\epsilon_{2}|}\\
&\phantom{asd}\\
&\leq \frac{2\epsilon}{||Q_{n}(q)|-\epsilon|}
+\frac{\epsilon\left(|G(q)|+\displaystyle{
D_{12}/j}
+1\right)}{||Q_{n}(q)|-\epsilon|}\\
&\phantom{asd}\\
&=\frac{\epsilon\left(|G(q)|+\displaystyle{
D_{12}/j}
+3\right)}{||Q_{n}(q)|-\epsilon|}\\
&\phantom{asd}\\
&\leq\frac{D_{13}'\,\epsilon}
{||Q_{n}(q)|-1/2|}
\leq \frac{2 D_{13}'\,\epsilon}
{D_{2}'\,\,j}
=\frac{D_{13} \epsilon}{j}.
\end{align*}
}
Here we have used \eqref{cn1}, \eqref{cn2},  the bounds on
$\epsilon_{1}$ and $\epsilon_{2}$ and
  the inequality  relating $|Q_{n}|$ and  $j$ above.

Similarly, if $j \geq N_{8}$, then
{\allowdisplaybreaks
\begin{align*}
|H_{n}(y)-H_{n}(q)|
&=\left|\frac{y^{\eta}}{G_{n}(y)}-\frac{q^{\eta}}{G_{n}(q)}\right|\\
&\phantom{asdddsdsd}\\
&= \left|\frac{G_{n}(q)(y^{\eta}-q^{\eta})
+q^{\eta}(G_{n}(q)-G_{n}(y))}{G_{n}(q)G_{n}(y)}\right|\\
&\phantom{asdddsdsd}\\
&\leq \frac{2|\eta||q-y|}{|G_{n}(y)|}+
\frac{|G_{n}(q)-G_{n}(y)|}{|G_{n}(q)||G_{n}(y)|}\\
&\phantom{asdddsdsd}\\
&\leq \frac{2|\eta||q-y|}{\left||G_{n}(q)|-\displaystyle{
D_{13}\, \epsilon /j}\right|}+
\frac{\displaystyle{
D_{13}\, \epsilon/j}}
{|G_{n}(q)|\left||G_{n}(q)|-\displaystyle{
D_{13}\,\epsilon/j}\right|}.
\end{align*}
} Here we have used \eqref{E:kns} and the fact that the angle
between $q$ and $y$ (measured from the origin) is less than
$\pi/(2|\eta|)$ implies that $|y^{\eta}-q^{\eta}| \leq
2|\eta||q-y|$ (This last inequality follows since the stated bound
implies $(q/y)^{\eta}$ lies in the first or fourth quadrant and
the fact that in these quadrants, chordal distance from 1 is less
than arc distance, which in turn is less than
 twice the chordal distance).
From \eqref{cn1} and \eqref{cn2},
it follows that
{\allowdisplaybreaks
\begin{align}\label{angbnd}
|H_{n}(y)-H_{n}(q)|
&\leq
\frac{2|\eta||q-y|}
{\left||G(q)|-
\displaystyle{( D_{12}+D_{13}\, \epsilon) /j}\right|}\\
&+
\frac{\displaystyle{D_{13}\, \epsilon / j}}
{||G(q)|-D_{12}/j|\left||G(q)|-\displaystyle{
(D_{12}+D_{13}\,\epsilon)/j}\right|} \notag\\
&\leq
\frac{2|\eta||q-y|}
{\left||G(q)|-
\displaystyle{( D_{12}+D_{13}/2) /N_{8}}\right|} \notag\\
&+
\frac{\displaystyle{D_{13}\, \epsilon / j}}
{||G(q)|-D_{12}/N_{8}|\left||G(q)|-\displaystyle{
(D_{12}+D_{13}/2)/N_{8}}\right|} \notag\\
&\leq
\frac{4|\eta|}{|G(q)|}|q-y|+\frac{4\,D_{13}}{|G(q)|^{2}}\frac{\epsilon}{j}.
\end{align}
}
Set $D_{14}=\max\{4/|G(q)|,\,\, 4|\eta|/|G(q)|\}$ and
$D_{15}=4\,D_{13}/|G(q)|^{2}$.

Statement \eqref{E:rnr} follows from \eqref{E:rns} and \eqref{flrn}.
\end{proof}
In the following three lemmas we assume
$|\lambda_{1}| > |\lambda_{2}|$.
\begin{lemma}\label{lem9b}
There exist
positive constants $C_{1}$, $C_{2}$, $C_{3}$, $C_{4}$, $C_{5}$,
 $C_{6}$ and $C_{7}$,
and a positive integer $N_{2}$,
each  depending only on
$s$,  such that,
  if  $j \geq 1$, then
{\allowdisplaybreaks
\begin{align}\label{ceq1}
C_{2}\,C_{1}^{j}<|Q_{jkm-1}| < C_{3}\,C_{1}^{j};
\end{align}
}
and if $j \geq N_{2}$, then
{\allowdisplaybreaks
\begin{align}\label{ceq2}
C_{4}\,C_{1}^{j}\leq|Q_{jkm-2}| \leq
C_{5}\,C_{1}^{j}
\end{align}
}
and
{\allowdisplaybreaks
\begin{align}\label{ceq3}
C_{6}\leq \frac{|Q_{jkm-1}|}{|Q_{jkm-2}|} \leq C_{7}.
\end{align}
}
\end{lemma}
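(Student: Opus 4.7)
The plan is to diagonalize $M$ and derive explicit expressions for $Q_{jkm-1}$ and $Q_{jkm-2}$ in terms of $\lambda_1^j$ and $\lambda_2^j$, then bound these two-sidedly using the spectral gap $|\lambda_1|>|\lambda_2|$. I first observe that by \eqref{cond6c} the entries of $M$ are fixed integers depending only on $s$, so $\lambda_1,\lambda_2$ are roots of a monic quadratic with integer coefficients; consequently, $|\lambda_1|\ne|\lambda_2|$ forces both eigenvalues to be real, since a non-real pair would have to be complex conjugates and hence have equal modulus. Thus $\rho:=\lambda_2/\lambda_1$ is a real number in $(-1,1)$.

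Next, since $\lambda_1\ne\lambda_2$, the eigenvectors $\mathbf{x},\mathbf{y}$ from \eqref{evecs} and \eqref{evecsb} are linearly independent, so I can write $M=S\,\mathrm{diag}(\lambda_1,\lambda_2)\,S^{-1}$ with $S=[\mathbf{x}|\mathbf{y}]$, whose second row is $(1,1)$. The distinct-eigenvalue analogue of \eqref{eqevals}, namely $M^j=\begin{pmatrix}P_{jkm-1}&a_{km}P_{jkm-2}\\Q_{jkm-1}&a_{km}Q_{jkm-2}\end{pmatrix}$, then lets me read the bottom row of $M^j$ directly as linear combinations of $\lambda_1^j$ and $\lambda_2^j$. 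Concretely, the formulas have the shape $Q_{jkm-1}=(\lambda_1^j-\lambda_2^j)/(x_1-y_1)$ and $a_{km}Q_{jkm-2}=(A\lambda_1^j+B\lambda_2^j)/(x_1-y_1)$, where $A,B\in\{\pm x_1,\pm y_1\}$ according to which eigenvector matches which eigenvalue.

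Setting $C_1:=|\lambda_1|$, the upper bound $|Q_{jkm-1}|\le 2|\lambda_1|^j/|x_1-y_1|$ is an immediate triangle-inequality consequence. For the lower bound I factor out $|\lambda_1|^j$ and use the key fact that, because $\rho$ is real with $|\rho|<1$, $\rho^j\in[-|\rho|,|\rho|]$ and hence $|1-\rho^j|\ge 1-|\rho|>0$ uniformly for every $j\ge 1$, giving $|Q_{jkm-1}|\ge(1-|\rho|)|\lambda_1|^j/|x_1-y_1|$. This uniformity over all $j\ge 1$ (not merely eventually) is exactly the content of \eqref{ceq1}, and it is the payoff from real-ness of $\lambda_1,\lambda_2$. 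The analogous factorization for $|Q_{jkm-2}|$ produces a factor $|1-c\rho^j|$ with $c$ a ratio of eigenvector components, and this is two-sided (between $1/2$ and $3/2$) only once $|c||\rho|^j\le 1/2$, i.e.\ for all $j\ge N_2$ where $N_2$ is chosen in terms of $|c|$ and $|\rho|$ alone; this gives \eqref{ceq2}, and \eqref{ceq3} follows by dividing \eqref{ceq1} by \eqref{ceq2}.

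The main obstacle is ruling out that the coefficient of the dominant $\lambda_1^j$ in the $a_{km}Q_{jkm-2}$ formula (equivalently, the relevant one of $x_1,y_1$) vanishes, for otherwise the lower bound on $|Q_{jkm-2}|$ collapses. I will show this as follows: if the coefficient were zero, then squaring the defining identity from \eqref{evecs}/\eqref{evecsb} and combining with the identity $T^2-4D=(P_{km-1}-a_{km}Q_{km-2})^2+4a_{km}P_{km-2}Q_{km-1}$ would force $P_{km-2}=0$; inserting this back into the diagonalized expression for the top row of $M^j$ would give $P_{jkm-2}\equiv 0$, so that $G_{jkm-2}(q)=0$ for every $j$, contradicting the convergence of $G(q)$ to a nonzero value guaranteed by \eqref{cond1}. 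All the constants $C_1,\dots,C_7$ produced by this argument are built from $|\lambda_1|,|\lambda_2|,x_1,y_1,a_{km}$ and $|x_1-y_1|$, each of which depends only on $s$ as noted after \eqref{evecsb}.
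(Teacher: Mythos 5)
Your proposal is correct and follows essentially the same route as the paper: diagonalize $M$ via \eqref{evecs}--\eqref{evecsb}, read off $Q_{jkm-1}=(\lambda_1^j-\lambda_2^j)/(x_1-y_1)$ and the analogous expression for $a_{km}Q_{jkm-2}$, take $C_1=|\lambda_1|$, bound the factor $1-(\lambda_2/\lambda_1)^j$ two-sidedly by the triangle inequality, and choose $N_2$ so the subdominant term in $Q_{jkm-2}$ is controlled (the paper secures $y_1\neq 0$ more directly from $x_1y_1=-a_{km}P_{km-2}/Q_{km-1}\neq 0$, but your contradiction via $P_{km-2}=0$ is equally valid). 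Your appeal to realness of the eigenvalues, while correct, is unnecessary: the reverse triangle inequality gives $|1-\rho^j|\geq 1-|\rho|^j\geq 1-|\rho|$ for any complex $\rho$ with $|\rho|<1$.
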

\begin{proof}
Let $\lambda_{1}$, $\lambda_{2}$, $x_{1}$ and $y_{1}$ be as defined at
\eqref{evals}, \eqref{evecs} and \eqref{evecsb}. Then
{\allowdisplaybreaks
\begin{align*}
&\left(
\begin{matrix}
P_{km-1} & a_{km}P_{km-2}\\
\phantom{a} &\phantom{a}\\
Q_{km-1} &a_{km}Q_{km-2}
\end{matrix}
\right)
=\left(
\begin{matrix}
x_{1} & y_{1}\\
\phantom{a} &\phantom{a}\\
1 & 1
\end{matrix}
\right)
\left(
\begin{matrix}
\lambda_{1} & 0\\
\phantom{a} &\phantom{a}\\
0 & \lambda_{2}
\end{matrix}
\right)
\left(
\begin{matrix}
x_{1} & y_{1}\\
\phantom{a} &\phantom{a}\\
1 & 1
\end{matrix}
\right)^{-1}.
\end{align*}
}
From Lemma \ref{lem6} it follows that
{\allowdisplaybreaks
\begin{align}\label{unmat}
\left(
\begin{matrix}
P_{jkm-1} & a_{km}P_{jkm-2}\\
\phantom{a} &\phantom{a}\\
Q_{jkm-1} &a_{km}Q_{jkm-2}
\end{matrix}
\right)
&=\left(
\begin{matrix}
x_{1} & y_{1}\\
\phantom{a} &\phantom{a}\\
1 & 1
\end{matrix}
\right)
\left(
\begin{matrix}
\lambda_{1}^{j} & 0\\
\phantom{a} &\phantom{a}\\
0 & \lambda_{2}^{j}
\end{matrix}
\right)
\left(
\begin{matrix}
x_{1} & y_{1}\\
\phantom{a} &\phantom{a}\\
1 & 1
\end{matrix}
\right)^{-1}\\
&\phantom{a}\notag\\
&=\frac{1}{x_{1}-y_{1}}
\left(
\begin{matrix}
x_{1}\lambda_{1}^{j}-y_{1}\lambda_{2}^{j} &
-x_{1}y_{1}(\lambda_{1}^{j}-\lambda_{2}^{j})\\
\phantom{a} &\phantom{a}\\
 \lambda_{1}^{j}-\lambda_{2}^{j}
& x_{1}\lambda_{2}^{j}-y_{1}\lambda_{1}^{j}
\end{matrix}
\right).\notag
\end{align}
}
Thus
{\allowdisplaybreaks
\begin{align*}
Q_{jkm-1}=\frac{1-(\lambda_{2}/\lambda_{1})^{j}}
{x_{1}-y_{1}}\lambda_{1}^{j}.
\end{align*}
}
Statement \eqref{ceq1} follows with $C_{1}=|\lambda_{1}|$ and
{\allowdisplaybreaks
\begin{align*}
C_{2}=\frac{1-\left|\lambda_{2}/\lambda_{1}\right| }
{\left|x_{1}-y_{1}\right|}
\hspace{25pt}
\text{ and }
\hspace{25pt}
C_{3}=\frac{1+\left|\lambda_{2}/\lambda_{1}\right| }
{\left|x_{1}-y_{1}\right|}.
\end{align*}
}
Note that, since $M$ has a non-zero integral determinant and
$|\lambda_{1}|>|\lambda_{2}|$, $C_{1} = |\lambda_{1}|>1$.

Similarly,
{\allowdisplaybreaks
\begin{align*}
Q_{jkm-2}=\frac{-y_{1}\lambda_{1}^{j}}{a_{km}(x_{1}-x_{2})}
\left(1-\frac{x_{1}}{y_{1}}\left(\frac{\lambda_{2}}{\lambda_{1}}
\right)^{j}\right).
\end{align*}
}
Choose $N_{2}$ large enough so that
\[\left|\frac{x_{1}}{y_{1}}\right|
\left|\frac{\lambda_{2}}{\lambda_{1}}\right|^{N_{2}}
<1
\]
 and then take
{\allowdisplaybreaks
\begin{align*}
C_{4}=\left| \frac{y_{1}}{a_{km}(x_{1}-y_{1})}\right|
\left(1-\left|\frac{x_{1}}{y_{1}}\right|
\left|\frac{\lambda_{2}}{\lambda_{1}}\right|^{N_{2}}\right)
\hspace{10pt}
\end{align*}
}
and
{\allowdisplaybreaks
\begin{align*}
C_{5}=\left| \frac{y_{1}}{a_{km}(x_{1}-y_{1})}\right|
\left(1+\left|\frac{x_{1}}{y_{1}}\right|
\left|\frac{\lambda_{2}}{\lambda_{1}}\right|\right).\phantom{ads}\\
\phantom{as}
\end{align*}
}
Note that equation \eqref{evecsb} and the fact that none of $a_{km}$, $Q_{km-1}$ and
$P_{km-2}$ is zero  ensure that $x_{1}$, $y_{1} \not = 0$,
and hence that $C_{4}$, $C_{5}\not = 0$.
Clearly,  for $j \geq N_{2}$,
\begin{align*}
C_{6}:=\frac{C_{2}}{C_{5}}
\leq \left|\frac{Q_{jkm-1}}{Q_{jkm-2}}\right|
&\leq
\frac{C_{3}}{C_{4}}=:C_{7}.
\end{align*}
Note that, by the remarks following \eqref{evecsb},  all of these constants depend
only on $s$. Note also that the condition $|\lambda_{1}|>1$ implies
 $\lim_{j \to \infty}|Q_{jkm-1}| = \lim_{j \to \infty}|Q_{jkm-2}| =\infty$.

\end{proof}

\begin{lemma}
There
exist positive constants $C_{8}<1$, $C_{9}$, $C_{10}$,
$C_{11}$, $C_{12}$, $C_{13}$ and $C_{14}$ and positive
integers $N_{5}$ and $N_{6}$,
each depending only on $s$, such that, if $j \geq 1$, then
\begin{align}\label{c9}
C_{9}\,C_{8}^{j}
&\leq \left|G(q) - G_{jkm-1}(q)\right|
\leq C_{10}\,C_{8}^{j};
\end{align}
if $j \geq N_{5}$, then
\begin{align}\label{c11}
C_{11}\, C_{8}^{j} &\leq \left|G(q) - G_{jkm-2}(q)\right|
\leq C_{12}\, C_{8}^{j};
\end{align}
and if $j \geq N_{6}$ and
$n = jkm-1$ or $jkm-2$, then
\begin{align}\label{c14}
C_{13}C_{8}^{j}\leq \left|H(q) - H_{n}(q)\right|
\leq C_{14}C_{8}^{j}.
\end{align}
\end{lemma}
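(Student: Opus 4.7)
The plan is to parallel the structure of Lemma \ref{lem99}, but using the diagonalization \eqref{unmat} from Lemma \ref{lem9b} (which already gives closed-form expressions for $P_{jkm-1}$, $Q_{jkm-1}$, $a_{km}P_{jkm-2}$ and $a_{km}Q_{jkm-2}$ in terms of $\lambda_1,\lambda_2,x_1,y_1$). The natural choice is $C_8 := |\lambda_2/\lambda_1|$, which is strictly less than $1$ since $|\lambda_1|>|\lambda_2|$, and which depends only on $s$ by the remarks following \eqref{evecsb}. Recall also that under the case hypothesis we have $G(q)=x_1$ by the remark after Lemma \ref{lem8}.

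First I would compute $G_{jkm-1}(q)$ directly from \eqref{unmat}: the $\lambda_1^j$ factors cancel in the ratio $P_{jkm-1}/Q_{jkm-1}$, giving
\[
G(q)-G_{jkm-1}(q)
=\frac{(y_1-x_1)(\lambda_2/\lambda_1)^j}{1-(\lambda_2/\lambda_1)^j}.
\]
Since $|1-(\lambda_2/\lambda_1)^j|\in[1-C_8,\,1+C_8]$ for every $j\ge 1$, this immediately yields \eqref{c9} with $C_9=|y_1-x_1|/(1+C_8)$ and $C_{10}=|y_1-x_1|/(1-C_8)$. Note $y_1\neq x_1$ (the two eigenvalues are distinct), and both $|x_1|$ and $|y_1|$ depend only on $s$, so the constants are of the required type.

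Next I would do the analogous computation for $G_{jkm-2}(q)=P_{jkm-2}/Q_{jkm-2}$, using the two lower-row entries of \eqref{unmat}. After the $\lambda_1^j$ cancellation and a line of algebra one gets
\[
G(q)-G_{jkm-2}(q)
=\frac{x_1(y_1-x_1)(\lambda_2/\lambda_1)^j}{y_1-x_1(\lambda_2/\lambda_1)^j}.
\]
I would choose $N_5$ so that $|x_1|C_8^{N_5}<|y_1|/2$ (possible since $y_1\neq 0$, as noted in the proof of Lemma \ref{lem9b}); then for $j\ge N_5$ the denominator lies in $[|y_1|/2,\,3|y_1|/2]$, producing \eqref{c11} with $C_{11}=\tfrac{2|x_1||y_1-x_1|}{3|y_1|}$ and $C_{12}=\tfrac{2|x_1||y_1-x_1|}{|y_1|}$.

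Finally, for \eqref{c14} I would use the identity
\[
H(q)-H_n(q)=\frac{q^{\eta}(G_n(q)-G(q))}{G(q)\,G_n(q)},
\]
together with $|q|=1$ and the fact that $|G(q)|=|x_1|$ is a positive constant depending only on $s$. Picking $N_6\ge N_5$ large enough that $\max(C_{10},C_{12})\,C_8^{N_6}<|x_1|/2$ forces $|G_n(q)|\in[|x_1|/2,\,3|x_1|/2]$ for $n=jkm-1$ or $jkm-2$ with $j\ge N_6$, so $|G(q)G_n(q)|\in[|x_1|^2/2,\,3|x_1|^2/2]$. Combining with \eqref{c9} and \eqref{c11} gives \eqref{c14} with $C_{13}=\tfrac{2\min(C_9,C_{11})}{3|x_1|^2}$ and $C_{14}=\tfrac{2\max(C_{10},C_{12})}{|x_1|^2}$. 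There is no real obstacle here beyond bookkeeping: the main point is the geometric-series telescoping that makes the $\lambda_1^j$'s cancel exactly in each ratio, leaving pure $C_8^j$-decay. The one place requiring care is verifying that the constants $|x_1|$, $|y_1|$, $|y_1-x_1|$, and $|G(q)|$ are all nonzero and depend only on $s$, which is already established in the proofs of Lemma \ref{lem8} and Lemma \ref{lem9b}.
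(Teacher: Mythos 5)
Your proposal is correct and follows essentially the same route as the paper: the same choice $C_8=|\lambda_2/\lambda_1|$, the same exact cancellation of the $\lambda_1^j$ factors in the closed forms from \eqref{unmat} giving $G(q)-G_{jkm-1}(q)=\frac{(y_1-x_1)(\lambda_2/\lambda_1)^j}{1-(\lambda_2/\lambda_1)^j}$ and the analogous expression for $G_{jkm-2}$, and the same final step bounding $|G(q)G_n(q)|$ away from $0$ and $\infty$ to pass to $H$. The only differences are cosmetic choices of the explicit constants (e.g.\ your $|y_1|/2$ bracketing versus the paper's $|y_1/x_1|\pm C_8^{N_5}$), and the nonvanishing of $x_1$, $y_1$, $x_1-y_1$ is justified exactly as in the paper.
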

\begin{proof}
From \eqref{unmat}
it can be seen that $G(q)$ converges to $x_{1}$  (and thus,
from \eqref{cond6c} and \eqref{evecs}, that
$|G(q)|$ depends only on $s$) so that
{\allowdisplaybreaks
\begin{align*}
\left|G(q) - G_{jkm-1}(q)\right|&=
\left|x_{1}
- \frac{x_{1}\lambda_{1}^{j}-y_{1}\lambda_{2}^{j}}
{ \lambda_{1}^{j}-\lambda_{2}^{j}}\right|
=\frac{|x_{1}-y_{1}|}
{\left|1-\left(\lambda_{2}/\lambda_{1}\right)^{j}\right|}
\left|\frac{\lambda_{2}}{\lambda_{1}}\right|^{j}.
\end{align*}
}
Set $C_{8}= |\lambda_{2}/\lambda_{1}|<1$,
$C_{9}=|x_{1}-y_{1}|/(1+C_{8})$  and
$C_{10}=|x_{1}-y_{1}|/(1-C_{8})$,  and \eqref{c9} follows.
Note that $x_{1} \not = y_{1}$ (else the eigenvalues would be equal),
 so that $C_{9}$ and $C_{10}$ are
non-zero.

Next, choose $N_{5}$ large enough so that
$C_{8}^{N_{5}}<|y_{1}/x_{1}|$, and consider $j \geq N_{5}$. Thus,
\begin{align*}
\left|G(q) - G_{jkm-2}(q)\right|&=
\left|x_{1}
- \frac{x_{1}y_{1}( \lambda_{1}^{j}-\lambda_{2}^{j})}
{-x_{1}\lambda_{2}^{j}+y_{1}\lambda_{1}^{j} }\right|
=\frac{|x_{1}-y_{1}|}
{\left|y_{1}/x_{1}-\left(\lambda_{2}/\lambda_{1}\right)^{j}\right|}
\left|\frac{\lambda_{2}}{\lambda_{1}}\right|^{j}.
\end{align*}
Set
\begin{align*}
C_{11}=\frac{|x_{1}-y_{1}|}{|y_{1}/x_{1}|+C_{8}},
\hspace{35pt}
&C_{12}=\frac{|x_{1}-y_{1}|}
{|y_{1}/x_{1}|-C_{8}^{N_{5}}},
&\phantom{as}
\end{align*}
and  \eqref{c11} follows.

Finally, let $n= jkm-1$ or $jkm-2$,
set $m^{'}=\min\{C_{9}, C_{11}\}$ and
 $M^{'}= \max\{C_{10}, C_{12}\}$. Choose $N_{6}$
such that $|G(q)|>M^{'}C_{8}^{N_{6}}$. Let $j \geq N_{6}$. Then
\begin{align*}
 \left|H(q) - H_{n}(q)\right|&=
\left|\frac{q^{\eta}}{G(q)}-\frac{q^{\eta}}{G_{n}(q)}\right|
=\left|\frac{G(q)-G_{n}(q)}{G(q)G_{n}(q)}\right|.
\end{align*}
From the definitions of  $m^{'}$ and $M^{'}$, and the choice of $N_{6}$,
 it follows that
\begin{align*}
\frac{m^{'}C_{8}^{j}}{|G(q)|\,\left(|G(q)|+M^{'}\right)}
&\leq\left|H(q) - H_{n}(q)\right|
\leq \frac{M^{'}C_{8}^{j}}{|G(q)|\left(|G(q)|-M^{'}C_{8}^{N_{6}}\right)}
\end{align*}
Set
\[
C_{13} = \frac{m^{'}}{|G(q)|\,\left(|G(q)|+M^{'}\right)},
\hspace{25pt}
C_{14} = \frac{M^{'}}{|G(q)|\left(|G(q)|-M^{'}C_{8}^{N_{6}}\right)},
\]
and \eqref{c14} follows.
\end{proof}

\begin{lemma}\label{lem10b}
Let $y$ be a another point on the unit circle.
There exist  positive constants $C_{15}$, $C_{16}$ and $C_{17}$ and
positive integers
$N_{9}$ and $N_{10}$, each depending only on  $s$, such that if $j \geq N_{9}$,
 $n=jkm-1$ or $jkm-2$.
and,
{\allowdisplaybreaks
\begin{align*}
&P_{n}(y)= P_{n}(q) + \epsilon_{1}, \hspace{5pt}
Q_{n}(y)= Q_{n}(q) + \epsilon_{2},
\text{ with }\epsilon = \max \{|\epsilon_{1}|,|\epsilon_{2}|\} < 1/2.
\end{align*}
}
 then
{\allowdisplaybreaks
\begin{align}\label{E:ekns}
\left| G_{n}(y) - G_{n}(q) \right| &
\leq \frac{C_{15}\,\epsilon}{C_{1}^{j}};
\end{align}
}
If  $j \geq N_{10}$,
 $n=jkm-1$ or $jkm-2$ and
the angle between $q$ and $y$ (measured from the origin)
 is less than
$ \pi /(2|\eta|)$, then
{\allowdisplaybreaks
\begin{align}\label{E:erns}
|H_{n}(y)-H_{n}(q)| &< C_{16}|q-y| + C_{17}\,\frac{\epsilon}{C_{1}^{j}}
\end{align}
}
and
{\allowdisplaybreaks
\begin{align}\label{E:ernr}
|H_{n}(y) - H(q)| \leq C_{16}|q-y| + C_{17}\,
\frac{\epsilon}{C_{1}^{j}}
+C_{14}\,C_{8}^{j}.\\
&\phantom{as}\notag
\end{align}
}
\end{lemma}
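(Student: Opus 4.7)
The proposal is to mirror the proof of Lemma \ref{lem10} essentially line-for-line, substituting the exponential lower bound $|Q_{n}(q)|\geq C_{2}'\,C_{1}^{j}$ (where $C_{2}':=\min\{C_{2},C_{4}\}$ and $C_{1}=|\lambda_{1}|>1$) coming from Lemma \ref{lem9b} in place of the linear lower bound $|Q_{n}(q)|\geq D_{2}'\,j$ that was used in Lemma \ref{lem10}. This is why $j$ in the denominators of \eqref{E:kns} and \eqref{E:rns} is replaced by $C_{1}^{j}$ in \eqref{E:ekns} and \eqref{E:erns}.

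For \eqref{E:ekns}, I would start from the algebraic identity
\[
G_{n}(y)-G_{n}(q)=\frac{\epsilon_{1}Q_{n}(q)-\epsilon_{2}P_{n}(q)}{Q_{n}(q)(Q_{n}(q)+\epsilon_{2})}
=\frac{\epsilon_{1}-\epsilon_{2}}{Q_{n}(q)+\epsilon_{2}}+\frac{\epsilon_{2}(G_{n}(q)-1)}{Q_{n}(q)+\epsilon_{2}},
\]
just as in Lemma \ref{lem10}. Using \eqref{c9} (and \eqref{c11}) to bound $|G_{n}(q)|\leq |G(q)|+C_{10}$, together with $|Q_{n}(q)|\geq C_{2}'C_{1}^{j}$, one gets
\[
|G_{n}(y)-G_{n}(q)|\leq \frac{\epsilon(|G(q)|+C_{10}+3)}{\bigl|\,|Q_{n}(q)|-\epsilon\,\bigr|}.
\]
Choosing $N_{9}$ large enough that $C_{2}'C_{1}^{N_{9}}\geq 1$ (so the denominator exceeds $C_{2}'C_{1}^{j}/2$ for $j\geq N_{9}$), one reads off a constant $C_{15}$ depending only on $s$ with the stated bound.

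For \eqref{E:erns}, I would use the factorization
\[
H_{n}(y)-H_{n}(q)=\frac{G_{n}(q)(y^{\eta}-q^{\eta})+q^{\eta}(G_{n}(q)-G_{n}(y))}{G_{n}(q)G_{n}(y)},
\]
apply the inequality $|y^{\eta}-q^{\eta}|\leq 2|\eta||q-y|$ (valid under the angle hypothesis on $q,y$, as explained after \eqref{angbnd}), and insert \eqref{E:ekns}. Then $N_{10}\geq N_{9}$ is chosen large enough that the deviations $C_{10}C_{8}^{j}$ (from \eqref{c9}) and $C_{15}\epsilon/C_{1}^{j}$ (from \eqref{E:ekns}) are both at most $|G(q)|/4$, so that $|G_{n}(q)|,\,|G_{n}(y)|\geq |G(q)|/2$. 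This yields
\[
|H_{n}(y)-H_{n}(q)|\leq \frac{4|\eta|}{|G(q)|}|q-y|+\frac{4C_{15}}{|G(q)|^{2}}\cdot\frac{\epsilon}{C_{1}^{j}},
\]
and one sets $C_{16}:=4|\eta|/|G(q)|$, $C_{17}:=4C_{15}/|G(q)|^{2}$ (recalling from the discussion after \eqref{unmat} that $|G(q)|$ depends only on $s$). Finally, \eqref{E:ernr} is just the triangle inequality $|H_{n}(y)-H(q)|\leq |H_{n}(y)-H_{n}(q)|+|H_{n}(q)-H(q)|$ combined with \eqref{c14}.

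There is really no single hard step here: the proof is a routine adaptation of Lemma \ref{lem10}. The only thing to watch is the bookkeeping that guarantees every constant depends only on $s$ — this is exactly where Lemma \ref{lem9b} is crucial, since it provides $s$-dependent bounds for $|Q_{jkm-1}|$ and $|Q_{jkm-2}|$ — and choosing $N_{9},N_{10}$ large enough (but still in terms of $s$) that the various ``denominator positive'' conditions hold simultaneously. Once those are in place, the exponential decay of the correction terms is strictly better behaved than the $1/j$ decay of the equal-eigenvalue case, so the estimates go through without difficulty.
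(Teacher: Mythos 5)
Your proposal is correct and follows essentially the same route as the paper's own proof: the identical decomposition of $G_{n}(y)-G_{n}(q)$ and $H_{n}(y)-H_{n}(q)$ from Lemma \ref{lem10}, with the linear lower bound on $|Q_{n}(q)|$ replaced by the exponential one $|Q_{n}(q)|\geq C_{2}'C_{1}^{j}$ from Lemma \ref{lem9b}, the same choice of $N_{9}$ (your condition $C_{2}'C_{1}^{N_{9}}\geq 1$ is exactly the paper's $N_{9}\geq -\log(C_{2}')/\log(C_{1})$), and the same triangle-inequality finish via \eqref{c14}. The only cosmetic difference is that the paper takes $C_{16}=\max\{4|\eta|/|G(q)|,\,4/|G(q)|\}$ and uses $C_{10}'=\max\{C_{10},C_{12}\}$ to cover both residues $n=jkm-1$ and $jkm-2$ uniformly, which your parenthetical reference to \eqref{c11} already implicitly handles.
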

\begin{proof}
Let
{\allowdisplaybreaks
\begin{align*}
C_{10}' &= \max \{ C_{10}, C_{12}\},
\hspace{5pt}
C_{2}' = \min \{ C_{2}, C_{4} \} \text{   and   }
\hspace{5pt}
N_{9} \geq \frac{-\log(C_{2}')}{\log(C_{1})}.
\end{align*}
}
Set
$C_{15}' = 3 + |G(q)|+ C_{10}'\,C_{8}^{N_{9}}$ and  $C_{15} =2 C_{15}'/C_{2}'$.
Choose $N_{10}$ such that
{\allowdisplaybreaks
\begin{align*}
\min \left \{
|G(q)|- \displaystyle{C_{10}'C_{8}^{N_{10}} }, \,\,\,\,\,
|G(q)|-\displaystyle{C_{10}'C_{8}^{N_{10}} }
-\displaystyle{\frac{C_{15}}
                {2C_{1}^{N_{10}}}}
\right  \} \geq \frac{|G(q)|}{2}.
\end{align*}
}
Let $j \geq N_{9}$.
The inequalities at  \eqref{ceq1} and \eqref{ceq2} imply  that
 $|Q_{n}| \geq C_{2}'C_{1}^{j}$. The condition on
$N_{9}$ implies that, if $j \geq N_{9}$, then
$|Q_{n}|-1/2>C_{2}'\,C_{1}^{j}/2$. By similar reasoning to that used
in the proof of \eqref{E:kns}, we find that
{\allowdisplaybreaks
\begin{align*}
\left| G_{n}(y) - G_{n}(q) \right|
&\leq \frac{|\epsilon_{1}-\epsilon_{2}|}{|Q_{n}(q) + \epsilon_{2}|}
    +\frac{|\epsilon_{2}||G_{n}(q)-1|}
    {|Q_{n}(q)+\epsilon_{2}|}\\
&\phantom{asd}\\
&\leq \frac{2\epsilon}{||Q_{n}(q)|-\epsilon|}
+\frac{\epsilon\left||G(q)|+ C_{10}'\,C_{8}^{j}
+1\right|}{||Q_{n}(q)|-\epsilon|}\\
&\phantom{asd}\\
&\leq \frac{C_{15}'\,\epsilon}
{||Q_{n}(q)||-1/2|}
\leq \frac{2 C_{15}'\,\epsilon}
{C_{2}'\,\,C_{1}^{j}}
=\frac{C_{15}\,\epsilon}{C_{1}^{j}}.
\end{align*}
}
Here we have used \eqref{c9}, \eqref{c11},  the bounds on
$\epsilon_{1}$ and $\epsilon_{2}$ in the statement of the lemma
and the inequality relating $|Q_{n}|$
 and  $C_{1}^{j} $  above.

Let $j \geq N_{10}$. As in the case where $\lambda_{1}=\lambda_{2}$,
\begin{align*}
|H_{n}(y)-H_{n}(q)|
&\leq \frac{2|\eta||q-y|}{|G_{n}(y)|}+
\frac{|G_{n}(q)-G_{n}(y)|}{|G_{n}(q)||G_{n}(y)|}\\
&\phantom{asdddsdsd}\\
&\leq \frac{2|\eta||q-y|}{\left||G_{n}(q)|-\displaystyle{
C_{15}\, \epsilon /C_{1}^{j}  }\right|}+
\frac{\displaystyle{
 C_{15}\, \epsilon /C_{1}^{j}   }}
{|G_{n}(q)|\left||G_{n}(q)|-\displaystyle{
C_{15}\, \epsilon /C_{1}^{j}
}\right|}.
\\&\phantom{asdddsdsd}
\end{align*}
Here  we have used \eqref{E:ekns} and once again the fact that
 the angle between $q$ and $y$, measured form the origin, is less
than $\pi/(2|\eta|$) implies that
$|y^{\eta}-q^{\eta}| \leq 2|\eta||q-y|$ (See the
explanation before \eqref{angbnd}).  Using \eqref{c9} and \eqref{c11},
it follows that,
{\allowdisplaybreaks
\begin{align*}
|H_{n}(y)-H_{n}(q)|
&\leq
\frac{2|\eta||q-y|} {||G(q)|-
    \displaystyle{C_{10}'C_{8}^{j}}-
    \displaystyle{C_{15}\, \epsilon/ C_{1}^{j}           }|     }\\
&\phantom{as}+
    \frac{\displaystyle{C_{15}\, \epsilon /C_{1}^{j}}}
         {||G(q)|- \displaystyle{C_{10}'C_{8}^{j} }|
||G(q)|-\displaystyle{C_{10}'C_{8}^{j}}
    -\displaystyle{C_{15}\, \epsilon/C_{1}^{j}}  |}\\
&\leq
\frac{2|\eta||q-y|} {||G(q)|-
    \displaystyle{C_{10}'C_{8}^{N_{10}}}-
    \displaystyle{C_{15}\, \epsilon/ C_{1}^{N_{10}}          }|     }\\
&\phantom{as}+
    \frac{\displaystyle{C_{15}\, \epsilon /C_{1}^{j}}}
         {||G(q)|- \displaystyle{C_{10}'C_{8}^{N_{10}} }|
||G(q)|-\displaystyle{C_{10}'C_{8}^{N_{10}}}
    -\displaystyle{C_{15}/(2C_{1}^{N_{10}})}  |}\\
&\leq
\frac{4|\eta|}{|G(q)|}|q-y|+
\frac{4\,C_{15}}{|G(q)|^{2}}\frac{\epsilon}{C_{1}^{j}}.
\end{align*}
}
Set $C_{16}=\max\{4|\eta|/|G(q)|,\,\,4/|G(q)|\}$
and $C_{17}= 4\,C_{15}/|G(q)|^{2}$.
Statement \eqref{E:ernr} follows from \eqref{E:erns} and \eqref{c14}.
\end{proof}
\begin{lemma}\label{2limlem}
There exists an uncountable set of points on the unit circle such
that, if $y$ is one of these points, then there exist two increasing sequences
of integer,
$\{n_{i}\}_{i=1}^{\infty}$ and $\{m_{i}\}_{i=1}^{\infty}$ say,
such that
\begin{align*}
\lim_{i \to \infty}H_{n_{i}}(y) &
=\lim_{i \to \infty}H_{n_{i}-1}(y) = H_{a},\\
\lim_{i \to \infty}H_{m_{i}}(y) &
=\lim_{i \to \infty}H_{m_{i}-1}(y) = H_{b},
\end{align*}
for some $a$, $b \in \{1,2,\cdots, N_{G}\}$, where $a \not = b$.
\end{lemma}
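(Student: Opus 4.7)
The plan is to construct the set directly by specifying the regular continued fraction expansion of $t$ inductively. First I fix a single positive integer $N'$ large enough to dominate every threshold integer $N_1, \ldots, N_{10}$ appearing in Lemmas \ref{lem9}--\ref{lem10b}; since each of those integers depends only on $s$, this choice is possible before $t$ is introduced. Invoking \eqref{cond7}, I pick residues $r \ne u \pmod d$ with $H(\exp(2\pi i r/s)) = H_a \ne H_b = H(\exp(2\pi i u/s))$, and then build $t = [0; e_1, e_2, \ldots]$ so that two disjoint infinite subsequences of its convergents satisfy the congruence conditions \eqref{e:fieqb}, and so that at every index $h$ belonging to either subsequence the next partial quotient satisfies $e_{h+1} > 2\pi\gamma(kN'd_h^2)$. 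At each step of the induction infinitely many choices of $e_{n+1}$ fulfill both the congruence and the lower-bound requirements, so the resulting set $S^\diamond$ of admissible $t$ is uncountable (and has measure zero since the partial quotients grow doubly exponentially).

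Fix one such $t$, set $y = \exp(2\pi i t)$, and for any $h$ in one of the two subsequences put $m = d_h$ and $q = \exp(2\pi i c_h/m)$. Then $q$ is a primitive $m$-th root of unity to which condition \eqref{cond1} applies. Combining the classical estimate $|t - c_h/m| < 1/(e_{h+1}m^2)$, the bound $|y - q| \le 2\pi|t - c_h/m|$, Lemma \ref{L:l1} and the growth condition on $e_{h+1}$ yields, for every $\nu \le N'km$,
\[
\max\{|P_\nu(y) - P_\nu(q)|,\ |Q_\nu(y) - Q_\nu(q)|\} \le \gamma(N'km)\,|y - q| < \frac{1}{m^2},
\]
so (once $m$ is large enough) the smallness hypothesis $\epsilon < 1/2$ of Lemmas \ref{lem10} and \ref{lem10b} holds at $n = jkm-1$ and $n = jkm-2$ for every $1 \le j \le N'$. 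I would then choose $j$ in this range large enough --- depending only on $s$ --- that the error terms $D_{11}/j$, $D_{15}\epsilon/j$ in \eqref{E:rnr} (equal-eigenvalue regime) or $C_{14}C_8^j$, $C_{17}\epsilon/C_1^j$ in \eqref{E:ernr} (unequal-eigenvalue regime) are as small as desired. Those lemmas then give $H_{jkm-1}(y), H_{jkm-2}(y) \to H(q)$ as $h \to \infty$.

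By condition \eqref{cond3}, the finitely-many values limit $H(q)$ equals $H_a$ whenever $h = f_n$ and $H_b$ whenever $h = g_n$. Setting $n_i = jkd_{f_i} - 1$ and $m_i = jkd_{g_i} - 1$ (so that $n_i - 1 = jkd_{f_i} - 2$ and $m_i - 1 = jkd_{g_i} - 2$) produces the four claimed subsequential limits. The main difficulty is bookkeeping rather than conceptual: one must commit to the pair $(N', j)$ \emph{before} constructing $t$ and then propagate this choice consistently through all the error estimates from Lemmas \ref{lem9}--\ref{lem10b}, so that the single explicit lower bound $e_{h+1} > 2\pi\gamma(kN'd_h^2)$ is strong enough to collapse every error term uniformly along both subsequences.
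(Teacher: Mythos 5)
Your overall strategy matches the paper's, but there is a genuine gap in your choice of the index $j$. You restrict attention to the approximants $H_{jkm-1}(y)$, $H_{jkm-2}(y)$ with $1 \le j \le N'$ and then propose to ``choose $j$ in this range large enough --- depending only on $s$ --- that the error terms $D_{11}/j$, \ldots, or $C_{14}C_{8}^{j}$, \ldots, are as small as desired.'' This cannot work: with $j$ bounded by the fixed integer $N'$, the term $D_{11}/j \ge D_{11}/N'$ (respectively $C_{14}C_{8}^{j} \ge C_{14}C_{8}^{N'}$) is a positive constant independent of $h$, so it does not tend to $0$ along your subsequence. Worse, the two-sided bounds \eqref{flrn} and \eqref{c14} show that $|H(q) - H_{jkm-1}(q)|$ is bounded \emph{below} by $D_{10}/j$ (resp. $C_{13}C_{8}^{j}$), so for fixed $j$ the approximant at the root of unity genuinely stays a bounded distance away from $H(q)=H_{a}$; no choice of $j$ depending only on $s$ can collapse this error, and the claimed limits $\lim_{i} H_{n_{i}}(y) = H_{a}$ do not follow from your set-up.

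The paper avoids this by letting $j$ grow with the convergent: it works with $n' = kN'd_{f_{n}}^{2}-1$ or $kN'd_{f_{n}}^{2}-2$, i.e.\ $j = N'd_{f_{n}} \to \infty$, so that $D_{11}/j$ and $C_{14}C_{8}^{j}$ vanish as $n \to \infty$, while $j \ge N'$ still guarantees the hypotheses of Lemmas \ref{lem10} and \ref{lem10b}. This is precisely why the growth condition \eqref{E:rconb} is stated with $\gamma(kN'd_{h_{n}}^{2})$ rather than $\gamma(kN'd_{h_{n}})$: one needs $|x_{n}-y| < 1/(d_{f_{n}}^{2}\gamma(kN'd_{f_{n}}^{2}))$ so that Lemma \ref{L:l1} controls $P_{\nu}$ and $Q_{\nu}$ up to the much larger index $\nu = kN'd_{f_{n}}^{2}$, giving $\epsilon < 1/d_{f_{n}}^{2}$ there. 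With that correction --- indices $n_{i} = kN'd_{f_{i}}^{2}-1$ and $m_{i} = kN'd_{g_{i}}^{2}-1$, with $j = N'd_{f_{i}}$ (resp.\ $N'd_{g_{i}}$) --- the rest of your argument, including the uncountability of $S^{\diamond}$, goes through as in the paper.
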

\begin{proof}
If $\lambda_{1}=\lambda_{2}$, we set $N' = N_{8}$.
If $|\lambda_{1}|>|\lambda_{2}|$, we set $N' = N_{10}$.
With the notation of Theorem \ref{tgen},
let $t \in S^{\diamond}$ and
set $y= \exp (2 \pi i t)$. Let $c_{f_{n}}/d_{f_{n}}$ be one of the
infinitely many approximants in the continued fraction expansion
of $t$ satisfying \eqref{e:fieqb} and \eqref{E:rconb},
 and set $x_{n} = \exp (2 \pi i c_{f_{n}}/d_{f_{n}})$, so
that $x_{n}$ is a primitive $d_{f_{n}}$-th root of unity and
 $H(x_{n}) = H_{a}$. Let $\gamma (n)$ be as defined at
\eqref{gameq}. We use, in turn, the fact that chord length is
shorter than arc length,  a standard bound on the
absolute value of the difference between
a real number and an approximant in its continued fraction expansion,
and   \eqref{E:rconb}, we find that
\begin{align}\label{E:rxydif}
|x_{n}-y| &< 2\pi\, \left|t - \frac{c_{f_{n}}}{d_{f_{n}}}\right|
 < \frac{2\,\pi}{ d_{f_{n}}^{2}e_{f_{n}+1}}
<\frac{1}{ d_{f_{n}}^{2}\gamma(k\,N'\,d_{f_{n}}^{2})}.
\end{align}
Let $n'= k\,N'\,d_{f_{n}}^{2}-1$ or $ k\,N'\,d_{f_{n}}^{2} -2$.
By \eqref{E:qdif}, \eqref{E:pdif}, \eqref{gameq},
 and \eqref{E:rxydif} it follows that
\begin{align}
|P_{n'}(x_{n})-P_{n'}(y)|\leq \gamma(n')|x_{n}-y|
<  \frac{1}{d_{f_{n}}^{2}},
\end{align}
and similarly
\begin{align}\label{E:rpq1}
|Q_{n'}(x_{n})-Q_{n'}(y)|&\leq \frac{1}{d_{f_{n}}^{2} }.
\end{align}
If $\lambda_{1}=\lambda_{2}$, then
by \eqref{E:rnr}, with $\epsilon = 1/d_{f_{n}}^{2}$
and $j =N'\, d_{f_{n}}$ (so that $j \geq N_{8}$), we find that
\vspace{5pt}
\begin{align}\label{E:rrnrdif}
|H_{n'}(y)-H_{a}| &=|H_{n'}(y)-H(x_{n})|
\leq \frac{D_{14}}{d_{f_{n}}^{2}\gamma(k\,N'\,d_{f_{n}}^{2} )}
 +\frac{ D_{15} }{d_{f_{n}}^{3}\,N'  }
+\frac{D_{11}}{N'\, d_{f_{n}} }.
\end{align}
If $|\lambda_{1}| \not = |\lambda_{2}|$ then \eqref{E:ernr} similarly
implies that
\vspace{5pt}
\begin{align}\label{E:r2rnrdif}
|H_{n'}(y)-H_{a}| &=|H_{n'}(y)-H(x_{n})|\\
&\leq \frac{C_{16}}{d_{f_{n}}^{2}\gamma(k\,N'\,d_{f_{n}}^{2} )}
 +\frac{ C_{17} }{d_{f_{n}}^{2}C_{1}^{N'\,d_{f_{n}}}}
+C_{14}\,C_{8}^{N'\,d_{f_{n}}  }. \notag
\end{align}
Thus, in either case,
\begin{align}
\lim_{n \to \infty}
H_{k\,N'\, d_{f_{n}}^{2}-1}(y)=\lim_{n \to \infty}
H_{k\,N'\, d_{f_{n}}^{2}-2}(y) = H_{a}.
\end{align}
Similarly,
\begin{align}
\lim_{n \to \infty}
H_{k\,N'\, d_{g_{n}}^{2}-1}(y) =\lim_{n \to \infty}
H_{k\,N'\, d_{g_{n}}^{2}-2}(y) = H_{b}.
\end{align}
The set $S^{\diamond}$ is  uncountable  because
the conditions for membership require restrictions on
only infinitely many of the partial quotients. One can easily construct
a subset for which there is no restriction on a fixed infinite set of
partial quotient. For each set of choices of positive integers
for these partial quotients, one can choose other partial quotients
so that the conditions for membership of $S^{\diamond}$ are fulfilled.
Since the collection of all such continued fractions
 is uncountable, $S^{\diamond}$
is an uncountable set.
 Thus
$Y_{G} = \{ \exp(2 \pi i t): t \in S^{\diamond}\} $ is an uncountable set.
\end{proof}
Before proving Theorem \ref{tgen}, we show that $Y_{G}$ has measure
zero. We use the following lemma.

\begin{lemma}\label{L:l3aa}
\text{$($\cite{RS92}$ \text{ pp. 140--141})$ }
Let $f(m)>1$ for $m=1,2,\dots$, and suppose that
$\sum_{n=1}^{\infty}$ $1/f(m)$ $< \infty$.
Then the set $S^{*} =
\{ t \in (0,1): e_{m}(t) > f(m) \text{ infinitely often}\,\,\}$ has measure
zero.
\end{lemma}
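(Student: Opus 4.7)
The plan is to deduce the lemma from the first Borel--Cantelli lemma, the main work being a single-coordinate estimate on the Lebesgue measure of the set where one partial quotient is large.

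First I would recast the problem as a $\limsup$. Set $A_m := \{t \in (0,1) : e_m(t) > f(m)\}$, so that $S^{*} = \limsup_{m \to \infty} A_m$. By the first Borel--Cantelli lemma, it suffices to prove that $\sum_{m=1}^{\infty} \mu(A_m) < \infty$, where $\mu$ denotes Lebesgue measure. Given the hypothesis $\sum_{m} 1/f(m) < \infty$, this will follow from the uniform bound
\begin{equation*}
\mu\bigl(\{t \in (0,1) : e_m(t) > N\}\bigr) \;\leq\; \frac{C}{N}
\end{equation*}
for some absolute constant $C > 0$, all $m \geq 1$, and all $N \geq 1$. Taking $N = f(m)$ and summing then gives $\sum_m \mu(A_m) \leq C \sum_m 1/f(m) < \infty$.

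To establish the single-coordinate bound, I would use either of two standard routes. The cleanest is via the Gauss map $T(t) = \{1/t\}$ and its invariant measure $d\mu_G = \frac{1}{\log 2} \frac{dt}{1+t}$: since $e_m(t) = e_1(T^{m-1} t)$, $T$-invariance of $\mu_G$ gives
\begin{equation*}
\mu_G\bigl(\{t : e_m(t) > N\}\bigr) \;=\; \mu_G\bigl((0, 1/N)\bigr) \;=\; \frac{\log(1 + 1/N)}{\log 2} \;\leq\; \frac{1}{N \log 2},
\end{equation*}
and comparability $\frac{1}{2\log 2}\mu \leq \mu_G \leq \frac{1}{\log 2}\mu$ on $(0,1)$ yields the claimed bound for Lebesgue measure. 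The elementary alternative is to write $\{e_m > N\}$ as the disjoint union, over admissible prefixes $(a_1,\dots,a_{m-1})$ and integers $a_m > N$, of the cylinders $C(a_1,\dots,a_m)$, whose lengths are explicitly expressible via convergents as $|c_m/d_m - c_{m-1}'/d_{m-1}'|$; summing the geometric-type tail over $a_m > N$ gives a factor $O(1/N)$ uniformly in the prefix, and then summing over prefixes gives the $O(1/N)$ bound on the whole measure.

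The main (and essentially only) obstacle is this single-coordinate bound; once it is in hand, Borel--Cantelli finishes the argument immediately. The ergodic-theoretic route is shorter but requires invoking invariance of the Gauss measure, while the elementary route is self-contained but demands a careful accounting of cylinder lengths. I would present the Gauss-measure argument, since it is the cleanest and matches the treatment in the reference \cite{RS92}.
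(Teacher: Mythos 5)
Your proof is correct: the statement is the classical Borel--Bernstein theorem, and the paper gives no proof of it, simply citing Rockett and Sz\"{u}sz, whose treatment is exactly the Borel--Cantelli argument built on the single-coordinate bound $\mu(\{e_m > N\}) \le C/N$ that you outline. The only (harmless) imprecision is that for integer $N$ the set $\{t : e_1(t) > N\}$ is $(0, 1/(N+1)]$ rather than $(0,1/N)$; since only the upper bound $\mu_G(\{e_1 > N\}) \le \mu_G\bigl((0,1/N]\bigr) \le 1/(N \log 2)$ is used, together with the comparability of $\mu_G$ and Lebesgue measure, nothing in the argument is affected.
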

Let $f(m) =  2 \pi \gamma (k\,N'\,m^{2})$, where $\gamma(n)$
is the function at \eqref{gameq}.
 Since $\gamma(j) \geq j$
for $j \geq 1$, it follows that $f(n) \geq2 \pi n^{2}$ and thus
that $\sum_{n=1}^{\infty}1/f(n)$  converges. Since, for the
regular continued fraction expansion of any real number, $d_{i}>i$
for $i \geq 4$, it follows that $d_{i}^{2} \geq (i+1)^{2}$ for $i
\geq 4$, and thus it is clear from \eqref{E:rconb} that the
elements in $S^{\diamond}$ satisfy $e_{m}(t) > f(m)$ infinitely
often. Hence $S^{\diamond}$ (and thus $Y_{G}$) is a set of measure
zero.

  Of course the actual set
of points on the unit circle at which $G(q)$ does not converge
generally might have measure larger than zero.

\emph{Proof of Theorem \ref{tgen}}.
Let $y$ be any point in $Y_{G}$, where $Y_{G}$ is  defined in the
proof of Lemma \ref{2limlem}, and
let $t$ be the irrational in $(0,1)$ for which
$y=\exp (2 \pi i t)$. $N'$ is defined in Lemma \ref{2limlem}.
If $\lambda_{1}=\lambda_{2}$, we set $N_{1/2} = N_{1}$.
If $|\lambda_{1}|>|\lambda_{2}|$, we set $N_{1/2} = N_{2}$.

Suppose $H(y)$ converges generally to $f \in \hat{\mathbb{C}}$ and that
$\{v_{n}\}$, $\{w_{n}\}$  are
two sequences such that
\[
\lim_{n \to \infty}\frac{P_{n}+v_{n}P_{n-1}}
     {Q_{n}+v_{n}Q_{n-1}}=
\lim_{n \to \infty}\frac{P_{n}+w_{n}P_{n-1}}
     {Q_{n}+w_{n}Q_{n-1}}=\frac{y^{\eta}}{f}:=g.
\]Suppose first that $|g| < \infty$.
By construction, there exist two infinite strictly increasing
 sequences of positive integers
$\{n_{i}\}_{i=1}^{\infty}$,
$\{m_{i}\}_{i=1}^{\infty}$ $\subset \mathbb{N}$ such that
\begin{align*}
L_{a}:=\,\,\,\,  \frac{y^{\eta}}{H_{a}}\,\,=\,\,
\lim_{i \to \infty}
\frac{P_{n_{i}}(y)}{Q_{n_{i}}(y)}\,\,\,\,  &=\,\,\,\,
\lim_{i \to \infty}\frac{P_{n_{i}-1}(y)}{Q_{n_{i}-1}(y)}\,\,\,\,
\end{align*}
and
\begin{align*}
L_{b}:=\,\,\,\,  \frac{y^{\eta}}{H_{b}}\,\,=\,\,
\lim_{i \to \infty}
\frac{P_{m_{i}}(y)}{Q_{m_{i}}(y)}\,\,\,\,  &=\,\,\,\,
\lim_{i \to \infty}\frac{P_{m_{i}-1}(y)}{Q_{m_{i}-1}(y)},
\end{align*}
for some $a \ne b$, where $a,b \in \{1,2,\cdots,N_{G}\}$.
Also by construction each $n_{i}$ has the form
$k\,N'\,d_{k_{i}}^{2}-1$, where $d_{k_{i}}$ is some denominator convergent
in the continued fraction expansion of $t$. A similar situation holds for each
$m_{i}$.
 It can be further  assumed that
$L_{a} \ne g$, since
$L_{a} \ne L_{b}$. For ease of notation write
\begin{align*}
 &P_{n_{i}}(y)\, =\,P_{n_{i}},
&Q_{n_{i}}(y)\, =\,Q_{n_{i}},\,\phantom{aas}\\
  &P_{n_{i}-1}(y)\, =\,P_{n_{i}-1},
&Q_{n_{i}-1}(y)\, =\,Q_{n_{i}-1}.
\end{align*}
Write
$P_{n_{i}}= Q_{n_{i}}(L_{a} + \epsilon_{n_{i}})$ and
$P_{n_{i}-1}= Q_{n_{i}-1}(L_{a} + \delta_{n_{i}})$, where
$\epsilon_{n_{i}} \to 0$ and
$\delta_{n_{i}} \to 0$ as $ i \to \infty $,
so that
\[
\frac{Q_{n_{i}}(L_{a}+ \epsilon_{n_{i}})+
w_{n_{i}}Q_{n_{i}-1}(L_{a} + \delta_{n_{i}})}
{Q_{n_{i}}+w_{n_{i}}Q_{n_{i}-1}}= g + \gamma_{n_{i}},
\]
where  $\gamma_{n_{i}} \to 0$
 as $i \to \infty$. Thus
\[
w_{n_{i}}+\frac{Q_{n_{i}}}{Q_{n_{i}-1}}
=\frac{Q_{n_{i}}}{Q_{n_{i}-1}} \times
\frac{\epsilon_{n_{i}}-\delta_{n_{i}}}
    {g-L_{a}+\gamma_{n_{i}}-\delta_{n_{i}}}.
\]
Because of  \eqref{eqev3} or  \eqref{ceq3},
 the fact that each $n_{i}$ has the form
$k\,N'\,d_{k_{i}}^{2}-1$, where $d_{k_{i}}$ is some denominator convergent
in the continued fraction expansion of $t$ and \eqref{E:rpq1},
 it follows that $Q_{n_{i}}/ Q_{n_{i}-1}$ is absolutely
bounded for $N'\,d_{k_{i}} > N_{1/2}$.
Therefore the right hand side of the last
equality tends to 0 as $i \to \infty$ and thus
\begin{align}\label{w's}
&w_{n_{i}}+Q_{n_{i}}/ Q_{n_{i}-1} \to 0\text{ as }n_{i} \to \infty.
\end{align}
Note  that $|w_{n_{i}}| < \infty$ for all $i$ sufficiently large, since
$|Q_{n_{i}}/ Q_{n_{i}-1}|<\infty$. Similarly,
\begin{align}\label{v's}
v_{n_{i}}+Q_{n_{i}}/ Q_{n_{i}-1} \to 0\text{ as }n_{i} \to \infty.
\end{align}
By the \eqref{w's}, \eqref{v's} and the triangle inequality,
\[
\lim_{i \to \infty}|v_{n_{i}}-w_{n_{i}}| = 0.
\]
Thus
\[
\liminf_{n \to \infty} d(v_{n},w_{n}) =0.
\]
Therefore $H(y)$ does not converge generally. The proof in the case
where $g$ is infinite is similar.

Since $Y_{G}$ is
  uncountable, this proves the theorem.
\begin{flushright}
$\Box$
\end{flushright}
Remark: Clearly $H(y) = y^{\eta}/G(y)$
converges generally if and only if $G(y)$ converges generally.

We have the following corollary to Theorem \ref{tgen}.
\begin{corollary}\label{cgen}
For each of the continued fractions $K(q)$,
$S_{1}(q)$, $S_{2}(q)$ and $S_{3}(q)$, there exists
an uncountable set of points on the unit circle at which the
continued fraction does not converge generally.
\end{corollary}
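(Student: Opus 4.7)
The plan is to derive Corollary \ref{cgen} as a direct application of Theorem \ref{tgen}, so the entire burden reduces to (i) verifying that each of $K(q)$, $S_1(q)$, $S_2(q)$, $S_3(q)$ satisfies the structural hypotheses \eqref{con4ab} and \eqref{cond1}--\eqref{cond7}, and (ii) confirming that the set $S^{\diamond}$ of irrationals $t\in(0,1)$ meeting the two conditions \eqref{e:fieqb} and \eqref{E:rconb} is nonempty (in fact uncountable). Once both are in hand, Theorem \ref{tgen} immediately produces an uncountable set $Y_G\subset \{|q|=1\}$ on which the corresponding $H(q)$, and hence $G(q)$, fails to converge generally.

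For $K(q)$, the verification has already been carried out in the running text preceding Theorem \ref{tgen}: Schur's identity \eqref{E:ScM} and the explicit values of $P_{m-1}(q), P_{m-2}(q), Q_{m-1}(q), Q_{m-2}(q)$ at an $m$-th root of unity with $m\equiv 1\pmod 5$ give $d=5$, $s=1$, $\eta=1/5$, and the constants $K_0,\ldots,K_4$; condition \eqref{cond7} is witnessed by the two values $H(e^{2\pi i r/5})=2\exp(2\pi i r/5)/(1+\sqrt5)$ for $r=1$ and $r=2$. For $S_1(q), S_2(q), S_3(q)$, each continued fraction has period $k\le 2$ in the sense of \eqref{con4ab}, and Zhang's analysis in \cite{Z91} (summarized in Table \ref{Ta:t5}) records, for each $S_j$, analogues of Schur's formula \eqref{E:ScM}: values of $d$, $s$, $\eta$, an explicit closed form for $H(q)$ at $m$-th roots of unity with $m\equiv s\pmod d$, the integers $K_0,\ldots,K_4$ arising from the $P_{km-2},P_{km-1},Q_{km-2},Q_{km-1}$ values, and at least two distinct residues $r\neq u\pmod d$ giving distinct values $H_a\neq H_b$. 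So each continued fraction can simply be matched against the list of hypotheses using Table \ref{Ta:t5}.

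The remaining task is the existence of uncountably many $t\in S^{\diamond}$. This is a standard construction and is the step one should write out in detail. The approximants $c_n(t)/d_n(t)$ satisfy the recurrences
\[
c_n = e_n c_{n-1}+c_{n-2},\qquad d_n = e_n d_{n-1}+d_{n-2},
\]
so reducing modulo $d$ shows that the residues $(c_n,d_n)\pmod d$ are entirely determined by the residues of the partial quotients $e_1,e_2,\ldots\pmod d$. The plan is to construct $t=[0,e_1,e_2,\ldots]$ inductively by alternating between two tasks: at odd stages, select $e_{n+1}$ in a fixed residue class mod $d$ so that the next approximant realizes $(c_{n+1},d_{n+1})\equiv(r,s)\pmod d$; at even stages, select $e_{n+1}$ to force $(c_{n+1},d_{n+1})\equiv(u,s)\pmod d$. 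A short linear-algebra check mod $d$ shows that for any current state $(c_{n-1},d_{n-1},c_n,d_n)\pmod d$ with $\gcd(c_n,d_n)$ coprime conditions holding, a suitable $e_{n+1}$ in some residue class mod $d$ always exists. Each chosen $e_{n+1}$ can then be taken arbitrarily large within its residue class, which furnishes the growth condition \eqref{E:rconb} at the same time. Since at each stage the residue class of $e_{n+1}$ determines only one digit but the magnitude of $e_{n+1}$ is free, uncountably many distinct irrationals $t$ arise, and all of them lie in $S^{\diamond}$.

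The main obstacle, and the only step that requires genuine checking rather than bookkeeping, is the combinatorial lemma that from any attainable state $(c_{n-1},d_{n-1},c_n,d_n)\pmod d$ one can always reach both $(r,s)$ and $(u,s)$ as $(c_{n+1},d_{n+1})\pmod d$ in finitely many further steps. For the specific moduli arising here ($d=5$ for $K(q)$, and the small $d$'s listed in Table \ref{Ta:t5} for $S_1,S_2,S_3$) this is a finite-state verification, and can be carried out either by direct enumeration of the transitions induced by varying $e_{n+1}\pmod d$, or more uniformly by observing that the action on the states $(c_{n},d_{n})\pmod d$ coming from successive partial quotients is transitive on the admissible residue classes $\{(r',s'):\gcd(r',d)=1\}$. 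Once this is in place, the conclusion of Theorem \ref{tgen} applies to each of the four continued fractions, yielding the desired uncountable divergence set. \qed
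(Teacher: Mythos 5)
Your proposal is correct, and its core is the same as the paper's: the paper proves the corollary exactly by matching each of $K(q)$, $S_1(q)$, $S_2(q)$, $S_3(q)$ against hypotheses \eqref{con4ab} and \eqref{cond1}--\eqref{cond7} using the data in Table \ref{Ta:t5} (writing out only the $S_1$ case, with $d=8$, $s=1$, $\eta=1/8$, $K_0=K_1=2$, $K_2=1$, $K_3=K_4=1$), and then simply invokes Theorem \ref{tgen}. Where you go further is in part (ii): the paper does not, in the corollary's proof, revisit the nonemptiness or uncountability of $S^{\diamond}$; that is claimed inside the proof of Lemma \ref{2limlem}, and there the argument is only that membership restricts ``infinitely many'' partial quotients, leaving free choices on an infinite subset. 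The paper never explicitly verifies that the congruence conditions \eqref{e:fieqb} on $(c_{f_n},d_{f_n})$ and $(c_{g_n},d_{g_n})$ mod $d$ are simultaneously achievable infinitely often. Your mod-$d$ steering argument --- using $c_{n+1}=e_{n+1}c_n+c_{n-1}$, $d_{n+1}=e_{n+1}d_n+d_{n-1}$ so that the state evolves by right multiplication by $\left(\begin{smallmatrix}e&1\\1&0\end{smallmatrix}\right)$ in $GL_2(\mathbb{Z}/d)$, choosing the residue class of $e_{n+1}$ to hit $(r,s)$ and $(u,s)$ alternately while leaving its magnitude free to satisfy \eqref{E:rconb} --- is exactly what is needed to make that step rigorous, and it buys a cleaner justification than the paper offers. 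One caution: the one-step reachable set from a given state is only a line $\{(c_{n-1},d_{n-1})+e(c_n,d_n)\}$ in $(\mathbb{Z}/d)^2$, so the ``finitely many further steps'' clause is essential and the transitivity claim should be checked on pairs with $\gcd(c_n d_{n-1}-c_{n-1}d_n,d)=1$ (determinant $\pm1$), not merely $\gcd(r',d)=1$; since $s=1$ in all four applications this works out, but as stated your transitivity criterion is slightly imprecise.
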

{\allowdisplaybreaks
\begin{table}[ht]
  \begin{center}
    \begin{tabular}{| c | c | c | c | c |}
    \hline
 $G(q)$      & $K(q)$   & $S_{1}(q)$     &$S_{2}(q)$      &$S_{3}(q)$ \\ \hline
             &      &        &        &  \\
$\eta$              & $1/5$     &$1/8$      &$1/2$            & $1/3$ \\
         &      &       &         &  \\
$(s,d)$           & $(1,5)$     &$(1,8)$        &$(1,8)$      & $(1,6)$ \\
         &      &       &         &  \\
$H(q)$ & $\frac{2\exp(2 \pi i\,r/5)}{1+\sqrt{5}}$
                & $\frac{1}{\sqrt{2}\exp(-\pi i\,r/4)}$
                        &$\frac{1}{(1+\sqrt{2})\exp(\pi i\,r)}$
                                  & $\frac{1}{2\exp(4\pi i\,r/3)} $ \\
             &      &       &         &  \\
$k$              & $1$      &$2$        &$2$              & $1$ \\
         &      &       &         &  \\
$f_{1},\cdots,f_{k}$& $x$       & $q x^{2}$, $q x+q^{2}x^{2}$ &$q x^{2}+q^{2}x^{4}$,$q^{4}x^{4}$& $x+x^{2}$ \\
             &      &       &         &  \\
$a_{km}$         & $1$          & $2$       &$1$              & $2$ \\
             &      &        &        &  \\
$P_{km-1}$       & $1$          & $2$           &$3$              & $1$ \\
             &      &       &         &  \\
$Q_{km-2}$       & $0$          & $1$       &$1$              & $0$ \\
             &      &       &         &  \\
$Q_{km-1}$       & $q^{(m-1)/5}$& $\epsilon q^{(m^{2}-1)/8}$ &$q^{(m-1)/2}$ & $q^{(m-1)/3} $ \\
             &      &       &         &  \\
$P_{km-2}$       & $q^{(1-m)/5}$& $\epsilon q^{(m-1)^{2}/8}$ &$q^{(m+1)/2}$ & $q^{(2m+1)/3}$ \\
\hline
    \end{tabular}
\phantom{asdf}\\
    \caption{}\label{Ta:t5}
    \end{center}
\end{table}
}
\begin{proof}
We use information contained in Table \ref{Ta:t5}.
In each case,
$q$ $=$ $\exp $ $(2 \pi i r/m)$,  a primitive $m$-th root of unity and
$m \equiv s \mod{d}$, where
$(s,d)$ is the
pair of integers from \eqref{cond1}.
$k$ is the integer and $f_{1},\cdots,f_{k}$ are the
polynomials from the definition of the continued fraction $G(q)$ at \eqref{E:cf1}.
$H(q):=q^{\eta}/G(q)$, where $\eta$
is the rational in row one of the table.

Row three gives the
value of $H(q)$, when $q$ $=$ $\exp $ $(2 \pi i r/m)$ as above. $a_{km}$ is the
$km$-th partial numerator in $G(q)$,
as defined at \eqref{E:cf1}.

The values in the first, third and
 last four rows come from the papers of  Schur (\cite{S17})
and Zhang (\cite{Z91}). The values of $a_{km}$ can be determined from the
continued fractions at \eqref{rreq} and \eqref{z1} -- \eqref{z3}.
For the last two entries in the $S_{1}(q)$ column, $\epsilon = (-1)^{(m-1)/4}$,
this notation being employed to make the table fit the width of the page.

We give the proof for
$S_{1}(q)$ only, since the proof for each of the other continued fractions
is almost identical.
 One can easily check that $S_{1}(q)$ has the form
given at \eqref{E:cf1} and satisfies the condition at \eqref{con4ab},
with $k=2$. From the table (or the paper of Zhang \cite{Z91}),
  $S_{1}(q)$ satisfies \eqref{cond1} with $d=8$ and $s=1$.
Likewise,  \eqref{cond3} is  satisfied with $\eta=1/8$. Conditions \eqref{cond6c}
are satisfied with $K_{0}=2$, $K_{1}=2$, $K_{2}=1$ and $K_{3}=K_{4}=1$
(when $m \equiv 1 \mod{8}$). It is clear from row three of the table that
\eqref{cond7} is satisfied, provided we choose $r \not \equiv u \mod{8}$.
The conditions required by the theorem are satisfied, and the result follows.
\end{proof}

\section{Concluding Remarks}
In proving the existence of an uncountable set of points on the unit
circle at which a $q$-continued fraction $G(q)$ does not converge in the
general sense,
our methods rely on knowing the behavior of the continued fraction
 at roots of unity and, if $q$ is a primitive
$m$-th root of unity, on the fact that the values of
$a_{km}(q)$, $P_{km-1}(q)$, $Q_{km-2}(q)$ and $Q_{km-1}(q)P_{km-2}(q)$
are fixed for $m$ belonging to certain arithmetic progressions (See
\eqref{cond6c}). Also important is the number $\eta$ from \eqref{cond3}
which leads to the continued fraction $H(q)$ taking only finitely
many values at roots of unity. Such $q$-continued fractions appear to
be quite special and it would interesting to have a complete
classification of them.

Our methods permit us to show the existence of a set of measure 0
at which each of the continued fractions diverges generally. We
conjecture that each of these continued fraction diverges
generally almost everywhere on the unit circle although at present
we do not see how to prove this. It would be very interesting if a
point on the unit circle which is not a root of unity could be
exhibited at which any one of the continued fractions which are
subject of Corollary \ref{cgen} converged, in either the classical
or general sense.

The most famous $q$-continued fraction after the Rogers-Ramanujan
continued fraction
is the G\"{o}llnitz-Gordon continued fraction, $GG(q)$ (see \eqref{ggcf}).
This continued fraction tends to the same limit
as $S_{2}(q)$, for each $q$ inside the unit circle, but the behaviour at
roots of unity is slightly different. As far as we are aware, its behaviour at
roots of unity has not been studied.
Based on computer investigations, it would seem that $GG(q)$
satisfies the conditions of Theorem \ref{tgen} and thus that the
G\"{o}llnitz-Gordon continued fraction  diverges at uncountably
many points on the unit circle. We hope to show this in a later paper.

{\allowdisplaybreaks

}

\end{document}